\crefname{section}{Section}{Sections}
\crefname{subsection}{\S}{\S\S}
\crefname{subsubsection}{\S}{\S\S}
\theoremstyle{plain}
\newtheorem{lemma}{Lemma}[section]
\newtheorem{proposition}[lemma]{Proposition}
\newtheorem{corollary}[lemma]{Corollary}
\newtheorem{theorem}[lemma]{Theorem}
\theoremstyle{plain}
\newtheorem{theoremN}{Theorem}
\theoremstyle{plain}
\newtheorem{definition}[lemma]{Definition}
\newtheorem{example}[lemma]{Example}
\newtheorem{remark}[lemma]{Remark}
\newtheorem{remarks}[lemma]{Remarks}
\crefname{definition}{definition}{definitions}
\crefname{ex}{example}{examples}
\crefname{exs}{example}{examples}
\crefname{remark}{remark}{remarks}
\crefname{remarks}{remark}{remarks}
\crefname{convention}{convention}{conventions}
\crefname{notation}{notation}{notations}
\crefname{table}{table}{tables}
\crefname{lemma}{lemma}{lemmas}
\crefname{proposition}{proposition}{propositions}
\crefname{propositionN}{proposition}{propositions}
\crefname{corollary}{corollary}{corollaries}
\crefname{corollaryN}{corollary}{corollaries}
\crefname{theorem}{theorem}{theorems}
\crefname{theoremN}{theorem}{theorems}
\crefname{enumi}{}{}
\crefname{assumption}{assumption}{Assumptions}
\crefname{construction}{construction}{Constructions}
\crefname{question}{question}{Questions}
\crefname{equation}{}{}
\numberwithin{equation}{section}
\theoremstyle{nonumberplain}
\newtheorem{proof}{Proof}
\newcommand\bC{{\mathbb C}}
\newcommand\bP{{\mathbb P}}
\newcommand\bR{{\mathbb R}}
\newcommand\bS{{\mathbb S}}
\newcommand\bZ{{\mathbb Z}}
\newcommand\cA{{\mathcal A}}
\newcommand\cB{{\mathcal B}}
\newcommand\cE{{\mathcal E}}
\newcommand\cS{{\mathcal S}}
\newcommand\cT{{\mathcal T}}
\newcommand\fT{{\mathfrak T}}
\DeclareMathOperator{\id}{id}
\newcommand{\cat}[1]{\textsc{#1}}
\newcommand{\qedhere}{\mbox{}\hfill\ensuremath{\blacksquare}}
\renewcommand{\square}{\mathrel{\Box}}
\newcommand{\xrightarrowdbl}[2][]{%
  \xrightarrow[#1]{#2}\mathrel{\mkern-14mu}\rightarrow
}
\title{Finite-index phenomena and the topology of bundle singularities}
\author{Alexandru Chirvasitu}
\begin{document}

\date{}

\newcommand{\Addresses}{{
    \bigskip
    \footnotesize

    \textsc{Department of Mathematics, University at Buffalo}
    \par\nopagebreak
    \textsc{Buffalo, NY 14260-2900, USA}  
    \par\nopagebreak
    \textit{E-mail address}: \texttt{achirvas@buffalo.edu}


  }}

\maketitle

\begin{abstract}
  A classical branched cover is an open surjection of compact Hausdorff spaces with uniformly bounded finite fibers and analogously, a quantum branched cover is a unital $C^*$ embedding admitting a finite-index expectation. We show that whenever a compact Hausdorff space $Z$ contains a one-point compactification of an uncountable set, the incidence correspondence attached to the space of cardinality-$(\le n)$ subsets of $Z$ (for $n\ge 3$) is a classical branched cover that does not dualize to a quantum one. In particular, when $Z$ is dyadic, the resulting $C^*$ embeddings are quantum branched covers precisely when $Z$ is also metrizable. This provides a partial converse to an earlier result of the author's (to the effect that continuous, unital, subhomogeneous $C^*$ bundles over compact metrizable spaces are quantum branched) and settles negatively a question of Blanchard-Gogi\'{c}.

  There are also some positive results identifying classes of compact Hausdorff spaces (e.g. extremally disconnected or orderable) with the property that all (continuous, unital) $C^*$ bundles based thereon are quantum branched. 
\end{abstract}

\noindent {\em Key words:
  branched cover;
  conditional expectation;
  continuous selection;
  dyadic;
  finite index;
  metrizable;
  one-point compactification;
  subhomogeneous

}

\vspace{.5cm}

\noindent{MSC 2020:
  54C65; 
  46L30; 
  46M20; 
  46A04; 
  54C60; 
  54B15; 
  46L85; 
  26E25 

}


\section*{Introduction}

The present paper is a follow-up on \cite{2409.03531v1}, revisiting a number of problems on and around the {\it non-commutative branched coverings} of \cite[Definition 1.2]{pt_brnch} stemming from the material in \cite{bg_cx-exp}. Recall some of the pertinent terminology:
\begin{itemize}[wide]
\item A continuous surjection $Y\xrightarrowdbl{}X$ of compact Hausdorff spaces is a {\it branched cover} if it is open and has a finite upper bound on the cardinalities of the fibers. 

\item Building on this, a unital embedding $A\le B$ of $C^*$-algebras is a {\it non-commutative (or quantum) branched cover} if it admits an {\it expectation} $B\xrightarrow{E}A$ (i.e. \cite[Theorem II.6.10.2]{blk} a norm-1 idempotent map) of {\it finite index} in the sense of \cite[Definition 2]{fk_fin-ind}:
  \begin{equation*}
    \exists K\ge 1
    \quad:\quad
    KE-\id\ge 0,
  \end{equation*}
  i.e. $KE-\id$ is a {\it positive} map in the sense \cite[Definition II.6.9.1]{blk} that
  \begin{equation*}
    KE(b^*b)\ge b^*b,\quad\forall b\in B.
  \end{equation*}
  In alternative phrasing, the {\it K-constant}
  \begin{equation}\label{eq:k.const}
    K(E):=\inf\{K\ge 1\ |\ KE-\id\ge 0\}
    \quad
    \left(\text{see \cite[p.89]{fk_fin-ind}}\right)
  \end{equation}
  is finite. 
\end{itemize}
We occasionally refer to the former notion as a {\it classical} (or {\it ordinary}, or {\it plain}, etc.) branched cover, to emphasize the distinction. The quantum branched covers in this paper are always at least ``half-classical'', in that the base (small) $C^*$-algebra $A$ is commutative: $A\cong C(X)$ for some compact Hausdorff $X$. The language and perspective will at times switch freely between
\begin{itemize}[wide]
\item on the one hand, {\it Banach bundles} on $X$ \cite[Definition 1.1]{dg_ban-bdl}:
  \begin{equation*}
    \cE\xrightarrowdbl[\text{continuous open}]{\quad\pi\quad}X
    ,\quad
    \text{Banach space structures }
    \left(
      \text{{\it fiber} }\cE_x:=\pi^{-1}(x),\ \|\cdot\|_x
    \right),
  \end{equation*}
  with algebraic operations continuous in the obvious sense and
  \begin{equation}\label{eq:normfunc}
    \cE\ni p
    \xmapsto{\quad}
    \|p\|_{\pi(p)}
    \in \bR_{\ge 0}
  \end{equation}
  {\it upper semicontinuous} \cite[Chapter 3, Problem F]{kel-top} (the bundle is {\it continuous} or {\it (F)} if \Cref{eq:normfunc} is instead continuous);

\item on the other hand, {\it locally $C(X)$-convex Banach modules} \cite[\S 6.1]{hk_shv-bdl} over the function $C^*$-algebra $C(X)$. 
\end{itemize}
The back-and-forth passage is afforded by \cite[Scholium 6.7]{hk_shv-bdl}, and specializes in particular to the correspondence between continuous unital {\it $C^*$-algebra} bundles on $X$ \cite[p.9]{dg_ban-bdl} and unital {\it $C\left(X\right)$-algebras} $C\left(X\right)\to A$ in the sense of \cite[Definition 1.5]{zbMATH04056334} (unital $C^*$ morphisms taking values in the center $Z(A)$), continuous in that
\begin{equation*}
  \forall a\in A
  ,\quad
  X\ni x
  \xmapsto{\quad}
  \|a\|_x\in \bR_{\ge 0}
\end{equation*}
is continuous, with $\|\cdot\|_x$ denoting the norm on the $C^*$-algebra
\begin{equation*}
  A_x\text{ (the {\it fiber at $x$} of $A$)}:=A/I_x\cdot A
  ,\quad
  I_x:=\left\{f\in C\left(X\right)\ |\ f(x)=0\right\}.
\end{equation*}
The $C(X)$-algebra $A$ attached to a $C^*$-bundle $\cA\xrightarrowdbl{\pi}X$ is the {\it section space} \cite[p.9]{dg_ban-bdl}
\begin{equation*}
  \Gamma(\cA)
  :=
  \left\{X\xrightarrow[\text{continuous}]{s}\cA\ |\ \pi\circ s = \id_X\right\}.
\end{equation*}
Keeping with the spirit of moving freely between $C(X)$ algebras and $C^*$ bundles, we refer to one such, $\cA\xrightarrowdbl{}X$, as a (non-commutative) branched cover if the corresponding embedding $C_b(X)\lhook\joinrel\to \Gamma_b(\cA)$ is one in the sense of the previous item (`b' subscripts indicate bounded continuous functions and sections respectively). 

The bounded-fiber-size condition translates for Banach (here mostly $C^*$) bundles $\cA\xrightarrowdbl{}X$ to {\it subhomogeneity} \cite[p.3]{bg_cx-exp}: $\sup_x \dim \cA_x<\infty$. The vocabulary being in place, \cite[Problem 3.11]{bg_cx-exp}, motivating much of the sequel and slightly paraphrased, asks whether subhomogeneous continuous unital $C^*$ bundles with non-zero fibers over compact Hausdorff spaces are quantum branched covers. There are various strands of evidence: 
\begin{itemize}[wide]
\item On the one hand, \cite[Theorem 1.1, implications (1) $\Rightarrow$ (2) and/or (3)]{pt_brnch} asserts an affirmative answer in the fully classical case of commutative $\Gamma(\cA)\cong C(Y)$.

\item On the other, \cite[Theorem 2.12]{2409.03531v1} answers the question affirmatively in the form given here for {\it metrizable} (compact Hausdorff) base spaces $X$. 
\end{itemize}

We will see below that the answer cannot be `yes' in the phrased generality even classically; {\it some} constraints on the base space $X$ (or alternatively, the bundle itself) are needed: \Cref{th:pt.non.metr,th:char.metr} identify classes of examples where metrizability is in fact equivalent to the existence of the requisite finite-index expectation. Compressing and streamlining the results:

\begin{theoremN}\label{thn:metr.needed.dyad}
  For compact Hausdorff $Z$ and a positive integer $n\ge 3$ set
  \begin{equation*}
    \begin{aligned}
      X&:=Z_{[n]}:=\left\{{\bf z}\subseteq Z\ :\ 1\le |{\bf z}|\le n\right\}\text{ with the {\it Vietoris topology} \cite[Definition 1.3.3]{kt_corresp}}\\
      Y&:=\left\{(z,x)\in Z\times X\ :\ z\in x\right\}\\
      Y\xrightarrowdbl{\pi}X
       &:=\text{the second projection}. 
    \end{aligned}    
  \end{equation*}

  \begin{enumerate}[(1),wide]
  \item If $Z$ contains the {\it one-point compactification} \cite[Definition 19.2]{wil_top} of a discrete uncountable space then the embedding $C(X)\lhook\joinrel\xrightarrow{\pi^*}C(Y)$ dual to the classical branched cover $\pi$ is not a quantum branched cover. 

  \item If $Z$ is {\it dyadic}, i.e. \cite[Problem 3.12.12]{eng_top_1989} a continuous image of $(\bZ/2)^{\alpha}$ for some cardinal $\alpha$, $\pi^*$ is a quantum branched cover if and only if $X$ (or equivalently, $Z$) is metrizable.  \qedhere
  \end{enumerate}
\end{theoremN}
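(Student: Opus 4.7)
Both parts rely on a standard translation: an expectation $E\colon C(Y)\to C(X)$ with $K$-constant $K$ is equivalent to a weak-$*$ continuous map $\mu\colon X\to \mathrm{Prob}(Z)$ assigning to each $x$ a probability measure $\mu_x$ supported on $x\subseteq Z$, with every atomic mass $\ge 1/K$. (This uses only that $\pi$ has finite fibers, so $E$ is determined fibrewise by a family of probability measures and the finite-index bound exactly controls atomic masses.)

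For part (1), fix an embedded one-point compactification $D\cup\{\infty\}\hookrightarrow Z$ of an uncountable discrete set $D$. Since each $d\in D$ is isolated in $Z$, the indicator $\chi_{\{d\}}$ is continuous on $Z$, and
\[
a_d:=\mu_{\{d,\infty\}}(\{d\})\in [1/K,\,1-1/K]
\]
is well defined. The Vietoris convergences $\{d,d'\}\to\{d,\infty\}$ and $\{d,d',d''\}\to\{d,\infty\}$ (as $d'$, respectively both $d'$ and $d''$, tend cofinitely to $\infty$), combined with the weak-$*$ continuity of $\mu$ evaluated against the continuous test function $\chi_{\{d\}}$, furnish for each $\varepsilon>0$ a finite set $F(d,\varepsilon)\subseteq D$ such that for all distinct $d',d''\in D\setminus F(d,\varepsilon)$ one has both $|\mu_{\{d,d'\}}(\{d\})-a_d|<\varepsilon$ and $|\mu_{\{d,d',d''\}}(\{d\})-a_d|<\varepsilon$.

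Now apply the $\Delta$-system lemma to the uncountable family $\{F(d,\varepsilon)\}_{d\in D}$, extracting an uncountable $D'\subseteq D$, a fixed finite root $R$, and pairwise \emph{disjoint} petals $F(d,\varepsilon)\setminus R$. Disjointness of petals forces each $d\in D'\setminus R$ to lie in the petal of at most one other $d'\in D'\setminus R$, which quickly yields distinct $d_1,d_2,d_3\in D'\setminus R$ with $d_j\notin F(d_i,\varepsilon)$ for all $i\neq j$. For this triple, the normalisations $\mu_{\{d_1,d_2\}}(\{d_1\})+\mu_{\{d_1,d_2\}}(\{d_2\})=1$ and $\sum_{i=1}^3\mu_{\{d_1,d_2,d_3\}}(\{d_i\})=1$ (the latter using $n\ge 3$) combine with the $\varepsilon$-approximations to give $|a_{d_1}+a_{d_2}-1|<2\varepsilon$ and $|a_{d_1}+a_{d_2}+a_{d_3}-1|<3\varepsilon$; subtracting, $a_{d_3}<5\varepsilon$, which for $\varepsilon<1/(5K)$ contradicts $a_{d_3}\ge 1/K$. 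The main obstacle is this combinatorial packaging --- arranging a single $\Delta$-system controlling both pair and triple continuity errors simultaneously --- but it is routine once one has enlarged $F(d,\varepsilon)$ to handle both convergences at once.

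For part (2), the ``if'' direction uses that $Z_{[n]}$ is metrizable precisely when $Z$ is (standard hyperspace preservation), so in the metrizable case \cite[Theorem 2.12]{2409.03531v1} applies directly to the continuous subhomogeneous bundle $\pi$. For ``only if'', invoke Shapirovskii's theorem: a non-metrizable dyadic compactum contains a copy of $\{0,1\}^{\omega_1}$; the standard unit vectors $e_\alpha$ ($\alpha<\omega_1$) are pairwise topologically isolated and converge along the cofinite filter to $0\in\{0,1\}^{\omega_1}$, so $\{0\}\cup\{e_\alpha\}_{\alpha<\omega_1}$ is homeomorphic to the one-point compactification of an uncountable discrete set, whereupon part (1) denies the expectation.
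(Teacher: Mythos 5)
Your argument is essentially correct, but it takes a genuinely different route from the paper's. For part (1) the paper does not argue combinatorially from the finite-index bound at all: it observes that \emph{any} expectation $C(Y)\to C(X)$ yields a weak$^*$-continuous selection of states subordinate to the LSC map ${\bf z}\mapsto \mathrm{cvx}\{z_i\}$, reduces via a pullback square to $Z=\Delta^+$, $n=3$, and then quotes the selection non-existence theorem of M\"agerl as packaged in \cite[Theorem B.5.2]{rs_cont-sel}; in particular \Cref{th:pt.non.metr} rules out \emph{all} expectations, not merely finite-index ones. Your proof only excludes finite-index expectations (the atom bound $\mu_x(\{z\})\ge 1/K$ is essential to your contradiction $a_{d_3}<5\varepsilon$), which suffices for the theorem as stated but is strictly weaker than what the paper proves; in exchange it is self-contained, replacing the citation by a direct Vietoris-convergence argument plus the $\Delta$-system lemma. (Your sunflower step is sound: after discarding the root, petal-disjointness gives in-degree $\le 1$ in the relevant digraph, so a greedy choice of $d_1,d_2,d_3$ works. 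Note also that the paper's own combinatorial engine, \Cref{eq:many.large.sets}, could be used here in place of the $\Delta$-system lemma.) For part (2) your scheme coincides with the paper's, except that the paper cites Efimov \cite[Proof of Theorem 4]{zbMATH03209749} directly for ``non-metrizable dyadic $\Rightarrow$ contains $\Delta^+$ with $\Delta$ uncountable discrete'', whereas you pass through the stronger cube-embedding theorem (containment of $\{0,1\}^{\omega_1}$, usually credited to Efimov and Gerlits rather than Shapirovskii); your extraction of the one-point compactification from $\{0,1\}^{\omega_1}$ via $\{0\}\cup\{e_\alpha\}$ is correct.

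One concrete misstep needs repair: you assert that each $d\in D$ is isolated in $Z$, so that $\chi_{\{d\}}\in C(Z)$ can serve as a test function. The hypothesis only gives a copy of $\Delta^+$ as a \emph{subspace} of $Z$, so points of $D$ need not be isolated in $Z$ (e.g.\ $Z=\Delta^+\times[0,1]$ with $D=\Delta\times\{0\}$), and $\chi_{\{d\}}$ is then not continuous. Two fixes are available: either first restrict along the pullback square of \Cref{th:pt.non.metr} to reduce to $Z=\Delta^+$ itself, where isolation does hold; or keep your in-situ argument but replace $\chi_{\{d\}}$ by a Urysohn function $f_d$ with $f_d(d)=1$, $f_d(\infty)=0$, noting that $\{d'\in D:\ f_d(d')\ge\varepsilon\}$ is finite (any neighborhood of $\infty$ in $Z$ meets $\Delta^+$ in a cofinite set) and absorbing it into $F(d,\varepsilon)$; this only worsens your constants from $5\varepsilon$ to $O(\varepsilon)$. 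With that adjustment, and granting the standard fiberwise translation of finite-index expectations into weak$^*$-continuous probability assignments with atoms $\ge 1/K$ (which you state without proof but which is routine for finite fibers), the proposal goes through.
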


This is all reminiscent of and very much in line with results (\cite[Theorem 5.2]{rs_cont-sel}, citing \cite[Theorem 2]{zbMATH03569401}) to the effect that metrizability is in some fashion necessary in Michael-type {\it selection theorems} such as the paradigmatic \cite[Theorem 3.2'']{mich_contsel-1}. For the class of spaces $X$ singled out by the theorem, the fiber cardinalities $|\pi^{-1}(x)|$ vary ``wildly enough'' to preclude the existence of a finite-index expectation $C(Y)\xrightarrow{E}C(X)$. 

For a plain branched cover $Y\xrightarrowdbl{\pi}X$, the construction of the desired $E$ proposed in \cite[Theorem 4.3]{pt_brnch} (or rather implicit there, since that proof is rather concerned with the associated {\it Hilbert-module inner product} \cite[Definition 15.1.1]{wo} $\braket{f\mid g}:=E(f^*g)$) proceeds recursively along the {\it strata}
\begin{equation*}
  X_d
  :=
  \{x\in X\ :\ |\pi^{-1}(x)|=d\}
  \subseteq X.
\end{equation*}
Even when such expectations {\it do} exist (so under more pleasant circumstances than those of \Cref{thn:metr.needed.dyad}), \Cref{ex:pt_2-3_sheets,ex:3.sheeted.disk,ex:prod.2.uncount.ord} below indicate various ways in which such a recursive procedure might not go through. 

In light of the above remark on wild fiber oscillation in \Cref{thn:metr.needed.dyad}, \Cref{th:tame.strat} classes of $C^*$ bundles where such oscillation does not obtain and which are consequently quantum branched covers. The fact that, regardless of metrizability, {\it all} reasonable bundles over {\it orderable} compact Hausdorff spaces are non-commutative branched covers then follows (\Cref{cor:ord.nc.brnch}). 

In conformity with the commutative $C^*$ version in \cite[post Lemma 2.3]{pt_brnch}, define the {\it strata} associated to a Banach bundle $\cA\xrightarrowdbl{}X$ by
\begin{equation}\label{eq:strata}
  \begin{aligned}
    X_{d}=X_{\cA=d}
    &:=
      \left\{x\in X\ |\ \dim \cA_x=d\right\}\\
    X_{\le d}=X_{\cA\le d}
    &:=
      \left\{x\in X\ |\ \dim \cA_x\le d\right\}
      =\bigcup_{d'\le d}X_{d'}.\\
  \end{aligned}    
\end{equation}

\begin{theoremN}\label{thn:well.stratified}
  Let $\cA\xrightarrowdbl{\pi}X$ be a unital continuous subhomogeneous $C^*$ bundle with non-zero fibers over a paracompact base space. 
  \begin{enumerate}[(1),wide]
  \item If for every $d\in \bZ_{\ge 1}$ the top stratum $X_d$ in $X_{\le d}$ admits a partition into clopen subspaces $U\subseteq X_{\le d}$ such that
    \begin{equation*}
      \forall e<d
      ,\quad
      X_e\cap \overline{U}
      \text{ is clopen in }X_{<d}\cap\overline{U}
    \end{equation*}
    then the bundle is a quantum branched cover. 

  \item In particular, if $X$ is compact Hausdorff carrying the {\it order topology} \cite[\S 14]{mnk} of a total ordering then all bundles as in the statement are quantum branched covers.  \qedhere
  \end{enumerate}
\end{theoremN}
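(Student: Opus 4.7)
The plan is to proceed by induction on $N := \sup_x \dim \cA_x$ (finite by subhomogeneity), constructing at each stage a finite-index conditional expectation $E : \Gamma(\cA) \to C(X)$ with $K$-constant bounded by $N$. The case $N = 1$ is immediate since $\cA \cong C(X)$ and $E = \id$ works. For the inductive step, the restriction $\cA|_{X_{<N}}$ to the closed (by lower semicontinuity of $x \mapsto \dim \cA_x$ on continuous $C^*$-bundles) paracompact subspace $X_{<N}$ is a subhomogeneous continuous $C^*$-bundle with supremum fiber dimension strictly less than $N$ and inheriting the stratum hypothesis, so the inductive assumption yields a finite-index expectation $E' : \Gamma(\cA|_{X_{<N}}) \to C(X_{<N})$.

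To extend $E'$ across the top stratum $X_N$, invoke the clopen partition $\{U_\alpha\}$ supplied by the hypothesis at $d = N$: each $U_\alpha$ is clopen in $X_{\le N} = X$, the intersections $U_\alpha \cap X_N$ partition $X_N$, and each $X_e \cap U_\alpha$ ($e < N$) is clopen in $X_{<N} \cap U_\alpha$. On each $U_\alpha$ define the fiberwise state $\tau_x := (\mathrm{rk}\,\cA_x)^{-1} \mathrm{Tr}_x$, where $\mathrm{Tr}_x$ is the canonical matricial trace on the finite-dimensional $C^*$-algebra $\cA_x$ and $\mathrm{rk}\,\cA_x := \mathrm{Tr}_x(1)$; this tracial state satisfies $\mathrm{rk}\,\cA_x \cdot \tau_x(b^*b) \ge b^*b$ in $\cA_x$, so $E_\alpha(a)(x) := \tau_x(a(x))$ defines a conditional expectation $\Gamma(\cA|_{U_\alpha}) \to C(U_\alpha)$ with $K$-constant at most $N$. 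Paracompactness then permits gluing the $E_\alpha$'s (via a locally finite clopen refinement, then a $C(X)$-linear convex combination) with $E'$ into a global $E$ with $K$-constant at most $\max(N, K(E'))$, closing the induction.

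For part (2), I would verify the hypothesis when $X$ is compact Hausdorff under the order topology of a total ordering. The stratum $X_d$ is open in the closed compact ordered subspace $X_{\le d}$, hence a disjoint union of maximal open order-intervals of $X_{\le d}$. For each such interval $I$ one selects a clopen order-interval $U \subseteq X_{\le d}$ containing $I \cap X_d$ by adjoining its endpoints (when they lie in $X_{<d}$) compatibly with partitions chosen inductively for smaller $d$; since order-intervals of $X_{\le d}$ meet each $X_e \subseteq X_{<d}$ in order-intervals of $X_e$, the clopenness of $X_e \cap U$ in $X_{<d} \cap U$ follows from the abundance of clopen cuts in compact ordered spaces (every initial segment with no supremum, or with an isolated supremum, is clopen). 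The main obstacle throughout is the continuity of $x \mapsto \tau_x(a(x))$ across stratum boundaries within a $U_\alpha$: a priori the matricial block structure of $\cA_x$ can vary discontinuously, and the clopen-stratum hypothesis is engineered precisely to localize the construction to regions where this variation is controlled and the normalized matricial trace depends continuously on $x$.
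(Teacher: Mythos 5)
The decisive gap is in your extension across the top stratum. The assignment $E_\alpha(a)(x):=\tau_x(a(x))$, with $\tau_x$ the normalized trace on $\cA_x$, is in general \emph{not} continuous at points where the fiber dimension drops, so $E_\alpha$ does not take values in $C(U_\alpha)$; and no $C(X)$-linear convex combination of the $E_\alpha$'s with $E'$ can repair this, since $E'$ lives only on the closed set $X_{<N}$ and weak$^*$-continuity of $x\mapsto E_x$ at a point $x_0\in\overline{U_\alpha}\cap X_{<N}$ forces the states chosen on the nearby top-stratum fibers $\cA_x$ to converge, along the local embeddings $\cA_{x_0}\hookrightarrow\cA_x$ given by section prolongation, to the already-chosen state at $x_0$ --- which the normalized traces typically do not. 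Concretely, take $X=[-1,1]$ (compact and orderable, so squarely within part (2)) and $A=\left\{f\in C([-1,1],M_3)\ :\ f(0)=\mathrm{diag}(a,a,b),\ a,b\in\bC\right\}$: the trace weights on nearby fibers are $(\tfrac13,\tfrac13,\tfrac13)$, whose only possible limit on $\cA_0\cong\bC^2$ is the state with weights $(\tfrac23,\tfrac13)$, not the tracial state $(\tfrac12,\tfrac12)$ and not whatever state your induction happened to produce at $0$; this is exactly the discontinuity phenomenon recorded in \Cref{le:23.sheets} and \Cref{res:emb.germ}. So your closing assertion --- that the clopen-stratum hypothesis is what makes the normalized trace continuous across stratum boundaries --- is false even in the orderable case; the hypothesis only guarantees that the fiber dimension is locally constant on the boundary of each piece of the top stratum, and all the difficulty lives precisely at those boundary points. (Relatedly, the pieces $U$ are not clopen in $X_{\le d}$ as you read them --- on that literal reading the single set $U=X_{\le d}$ would satisfy the hypothesis and it would trivialize; they partition $X_d$, and their closures pick up boundary inside $X_{<d}$.)

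What is missing is an inward-extension mechanism for the boundary data, which is how the paper proceeds: near a boundary point $x_0$ of a piece $U$ of the top stratum one trivializes the bundle by sections (Fell), obtaining embeddings $\iota_x\colon\cA_{x_0}\hookrightarrow\cA_x$ for nearby $x$; the states already defined on the boundary are extended by Tietze to states on the embedded copies $\iota_x(\cA_{x_0})$, then promoted to states $\mathrm{tr}(t_x\cdot\bullet)$ on all of $\cA_x$ via a trace-density element $t_x$; these local extensions are glued by a partition of unity over a collar of the boundary; and on the remaining part of $\overline{U}$, where the bundle is homogeneous, one invokes \Cref{le:ext.same.k} to extend with no increase of the K-constant. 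Your overall recursion over strata and your base case are fine, and your part (2) sketch (maximal order-intervals of $X_d$ in $X_{\le d}$, with at most two boundary points) is essentially the paper's \Cref{cor:ord.nc.brnch}, but as written part (1) does not go through without replacing the fiberwise-trace recipe by some such boundary-compatible extension step.
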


\subsection*{Acknowledgements}

I am grateful for kind interest and useful remarks from I. Gogi{\'c} and E. Troitsky. 



\section{Positive and no-go results on quantum branched covers}\label{se:yy}


The portion of \cite[Theorem 1.1]{pt_brnch} relegated to \cite[Theorem 4.3]{pt_brnch} states that $C^*$ morphisms $C\left(X\right)\lhook\joinrel\to C\left(Y\right)$ dual to (plain, topological) branched covers $Y\xrightarrowdbl{} X$ are non-commutative branched covers. \Cref{th:pt.non.metr} argues that this cannot be so without some additional assumptions (e.g. metrizability for the spaces in question).

For spaces $Z$ and positive integers $n$ we write $Z_{[n]}$ for the corresponding {\it $n^{th}$ truncated power-set} (what \cite[p.608]{zbMATH03569401} and \cite[p.164]{MR450972} would denote by $Z(n)$):
\begin{equation}\label{eq:zn2z_n}
  Z^n
  \ni
  (z_i)_{i=1}^n
  \xrightarrowdbl[]{\quad\text{quotient map }\quad}
  \left\{z_i\right\}_{i=1}^n
  \in
  Z_{[n]}
  :=
  \left\{{\bf z}\subseteq Z\ :\ 1\le |{\bf z}|\le n\right\}.
\end{equation}
$Z_{[n]}$ thus consists of the non-empty $(\le n)$-element subsets of $Z$ (so is empty when $Z$ is), and carries the quotient topology inherited from \Cref{eq:zn2z_n}. Equivalently, this is the the {\it Vietoris topology} \cite[\S 1.2]{ct_vietoris} on the collection of closed subsets of $Z$. Define the {\it incidence correspondence} (e.g. \cite[Example 6.12]{har_ag}) associated to that collection of subsets:
\begin{equation*}
  Z_{[n]}^{\subseteq}
  :=
  \left\{(z,x)\in Z\times Z_{[n]}\ |\ z\in x\right\}
  \subseteq
  Z\times Z_{[n]}.
\end{equation*}

\begin{remark}
  \cite[Paragraph preceding Theorem B.1]{MR450972} refers to $Z_{[n]}$ as the {\it symmetric product} of $Z$, but that terminology is in conflict with, say, the algebraic geometry literature, where symmetric products (or powers) are pervasive. There (e.g. \cite[p.18]{acgh1}, \cite[Example 10.23]{har_ag}, \cite[\S 1]{Polizzi}, \cite{CaCi93}), the phrase {\it symmetric product} rather refers to
  \begin{equation}\label{eq:zn2z^n}
    Z^n
    \xrightarrow{\quad\text{quotient map}\quad}
    Z^{[n]}
    :=
    Z^n/\left(\text{permutation action by $S_n$}\right).
  \end{equation}
  This is thus a larger quotient of $Z^n$: \Cref{eq:zn2z_n} factors through \Cref{eq:zn2z^n}.

  The difference between $Z_{[n]}$ and $Z^{[n]}$ is (respectively) that between sets and {\it multi}sets: in the latter space multiplicity information is retained, so that $\{z,z',z'\}$ and $\{z,z,z'\}$ count as distinct objects for $z\ne z'$; in the former, both are identified with the underlying plain set $\{z,z'\}$. 
\end{remark}

\Cref{le:inc.corr.brnch} below is a simple observation to build on. We remind the reader (e.g. \cite[p.712]{zbMATH06329568} or \cite[Definition 7.1.1]{kt_corresp}) that a map $X\xrightarrow{\varphi} 2^Y$ for topological $X$ and $Y$ is
\begin{itemize}[wide]
\item {\it lower semicontinuous (LSC)} if
  \begin{equation*}
    \left(\forall \text{ open }W\subseteq Y\right)
    \left(\forall x\in X\right)
    \quad:\quad
    \left\{x'\in X\ |\ \varphi(x')\cap W\ne\emptyset\right\}
    \text{ is open in }X;
  \end{equation*}

\item {\it upper semicontinuous (USC)} if
  \begin{equation*}
    \left(\forall \text{ open }W\subseteq Y\right)
    \left(\forall x\in X\right)
    \quad:\quad
    \left\{x'\in X\ |\ \varphi(x')\subseteq W\right\}
    \text{ is open in }X.
  \end{equation*}
\end{itemize}

\begin{lemma}\label{le:inc.corr.brnch}
  For compact Hausdorff $Z$ and $n\in \bZ_{>0}$ the obvious surjection
  \begin{equation}\label{eq:zn.corresp}
    Z_{[n]}^{\subseteq}
    \lhook\joinrel\xrightarrow{\quad}
    Z\times Z_{[n]}
    \xrightarrowdbl{\quad\text{$2^{nd}$ projection}\quad}
    Z_{[n]}
  \end{equation}
  is a branched cover. 
\end{lemma}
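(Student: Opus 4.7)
The plan is to verify in turn the three conditions defining a classical branched cover: continuous surjectivity, a uniform bound on fiber cardinalities, and openness. Continuity is free (the map is a restriction of a coordinate projection), and surjectivity is immediate since every $x\in Z_{[n]}$ is non-empty by definition, so contains some $z$ that pairs with it. The fiber over $x$ is in canonical bijection with the underlying subset $x\subseteq Z$ itself, hence has cardinality $|x|\le n$; the uniform bound of $n$ thus holds by construction. Along the way I would record that $Z_{[n]}$ is compact Hausdorff (standard for the Vietoris topology on closed subsets of a compact Hausdorff space) and that $Z_{[n]}^{\subseteq}$ is closed in $Z\times Z_{[n]}$ — hence compact Hausdorff in its own right — by the standard separation argument: given $(z,x)$ with $z\notin x$, use Hausdorffness of $Z$ to find disjoint open neighborhoods $U$ of $z$ and $V$ of the finite (hence closed) set $x$, whereupon $U\times\{x'\in Z_{[n]}\ |\ x'\subseteq V\}$ is an open neighborhood of $(z,x)$ disjoint from $Z_{[n]}^{\subseteq}$.

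The substantive step is openness. I would take an arbitrary subbasic open subset of $Z_{[n]}^{\subseteq}$, namely one of the form $(U\times V)\cap Z_{[n]}^{\subseteq}$ with $U\subseteq Z$ and $V\subseteq Z_{[n]}$ open, and observe that its image under the second projection is
\[
V\cap\{x\in Z_{[n]}\ :\ x\cap U\ne\emptyset\}.
\]
The second factor is one of the subbasic open sets by which the Vietoris topology is defined, so the intersection is open in $Z_{[n]}$ as required. Phrased more invariantly, openness of the projection is equivalent to lower semicontinuity of the tautological multifunction $Z_{[n]}\ni x\mapsto x\subseteq Z$, and this LSC is built into the definition of the Vietoris topology.

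In short, no serious obstacle arises: the argument amounts to unpacking the definitions. The only fiddly point is matching up the two descriptions of the topology on $Z_{[n]}$ — ensuring that the ``lower Vietoris'' subbasic opens $\{x\ :\ x\cap U\ne\emptyset\}$ really do appear in the quotient topology inherited from $Z^n\rightarrowdbl Z_{[n]}$ of \Cref{eq:zn2z_n} — and this verification is routine.
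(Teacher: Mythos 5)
Your proof is correct and follows essentially the same route as the paper's: the fiber bound is immediate, closedness of $Z_{[n]}^{\subseteq}$ in $Z\times Z_{[n]}$ gives compactness of the domain, and openness of the projection is exactly lower semicontinuity of the tautological correspondence $x\mapsto x$, which the paper cites abstractly and you verify by hand via the lower-Vietoris sets $\{x\ :\ x\cap U\ne\emptyset\}$. One small point of hygiene: the sets $(U\times V)\cap Z_{[n]}^{\subseteq}$ are in fact \emph{basic} (not merely subbasic) open sets of the subspace topology, and it is this that legitimizes checking openness on them alone, since images commute with unions but not with intersections.
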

\begin{proof}
  It is of course a continuous surjection, and the fiber cardinalities are all $\le n$.

  Given that the elements of $Z_{[n]}\subseteq 2^Z$ are all closed, the closure of the left-hand embedding in \Cref{le:inc.corr.brnch} is equivalent \cite[Theorems 7.1.15 and 7.1.16]{kt_corresp} to (and hence follows from) the upper semicontinuity of the embedding $Z_{[n]}\subseteq 2^Z$; the domain of \Cref{eq:zn.corresp} is thus compact Hausdorff.

  Finally, the openness of the map follows in like fashion from the {\it lower} semicontinuity of the same inclusion $Z_{[n]}\subseteq 2^Z$: the general principle at work (and a simple exercise) is that for topological $X$ and $Y$ and an LSC map $X\xrightarrow{\varphi} 2^Y$, the composition
  \begin{equation*}
    \bigg(\text{{\it graph} \cite[Definition 6.1.2]{kt_corresp} of $\varphi$}\bigg)
    :=
    \left\{(y,x)\in Y\times X\ |\ y\in \varphi(x)\right\}
    \xrightarrow{\quad\text{$2^{nd}$ projection}\quad}
    X
  \end{equation*}
  is open.
\end{proof}


\begin{theorem}\label{th:pt.non.metr}
  Let $Z$ be a compact Hausdorff space containing the {\it Alexandroff compactification} \cite[Definition 19.2]{wil_top} of a discrete uncountable space and $n\in \bZ_{\ge 3}$.
  
  The embedding $C\left(X\right)\lhook\joinrel\xrightarrow{\iota} C\left(Y\right)$ dual to
  \begin{equation*}
    Y:=
    Z_{[n]}^{\subseteq}
    \xrightarrowdbl{\quad\text{$\pi:=$\Cref{eq:zn.corresp}}\quad}
    Z_{[n]}
    =:X
  \end{equation*}
  does not admit a conditional expectation, so in particular is not a non-commutative branched cover in the sense of \cite[Definition 1.2]{pt_brnch}.
\end{theorem}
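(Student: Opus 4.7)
Assume an expectation $E\colon C(Y)\to C(X)$ exists. By standard $C^*$-duality, $E$ corresponds to a weak-*-continuous map $X\ni\mathbf{z}\mapsto\rho_\mathbf{z}\in P(Y)$ into Radon probability measures with each $\rho_\mathbf{z}$ supported on the fibre $\pi^{-1}(\mathbf{z})\cong\mathbf{z}$; postcomposing with the first projection $Y\to Z$ yields a continuous $\Psi\colon X\to P(Z)$, $\mathbf{z}\mapsto\nu_\mathbf{z}$, with $\nu_\mathbf{z}$ supported on $\mathbf{z}\subseteq Z$ and $\nu_{\{z\}}=\delta_z$. The task then becomes to show that no such $\Psi$ can exist when $Z$ contains the Alexandroff compactification of an uncountable discrete space.

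Fix an embedding $D\cup\{\infty\}\hookrightarrow Z$ with $D$ uncountable discrete. Each $\{d\}\subseteq Z$ for $d\in D$ is clopen, so the ``coordinate'' $\phi_d\colon X\to[0,1]$, $\phi_d(\mathbf{z}):=\nu_\mathbf{z}(\{d\})$, is continuous, equals $1$ at $\{d\}$, vanishes off the clopen set $\{\mathbf{z}\in X:d\in\mathbf{z}\}$, and is subject to $\sum_{d\in D}\phi_d(\mathbf{z})+\nu_\mathbf{z}(\{\infty\})=1$ on sets $\mathbf{z}\subseteq D\cup\{\infty\}$. These uncountably many continuous probes into the hypothetical $\Psi$ are the essential new input.

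The core of the argument, and the step I expect to be the main obstacle, is parlaying these continuity constraints into an obstruction of the kind already identified in the selection-theory literature. Concretely, I would focus on $3$-element sets $\{d_1,d_2,d\}$ with $d_1,d_2\in D$ fixed and $d\in D\setminus\{d_1,d_2\}$ varying: these are isolated points of $X$ but Vietoris-converge to the non-isolated $\{d_1,d_2,\infty\}$ as $d\to\infty$ in $\alpha D$. The weights on $d_1$ and $d_2$ along this net are pinned by the continuity of $\phi_{d_1},\phi_{d_2}$, while the residual weight on the moving $d$ has to fit into the global continuity of the uncountable family $\{\phi_d\}_{d\in D}$; the analogous analysis on $2$-element sets forces a comparison between $\phi_d(\{d,e\})$ and $\phi_e(\{d,e\})$ for varying $d,e$. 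I anticipate that this bookkeeping extracts from $\Psi$ a continuous weak-selection-like assignment on $(\alpha D)_{[2]}$ or $(\alpha D)_{[3]}$ of exactly the sort ruled out by the metrizability-necessity results referenced in the introduction (\cite[Theorem~5.2]{rs_cont-sel} citing \cite[Theorem~2]{zbMATH03569401}); since $\alpha D$ for $|D|>\aleph_0$ is notoriously far from admitting such selections, the resulting contradiction completes the argument.
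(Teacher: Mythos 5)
Your opening move is exactly the paper's: an expectation $C(Y)\to C(X)$ is (by Tomiyama-type module considerations) the same thing as a weak$^*$-continuous assignment of a probability measure on the fiber $\pi^{-1}(\mathbf z)\cong\mathbf z$ to each $\mathbf z\in X$, i.e.\ a continuous selection of the convex-hull-valued map $\mathbf z\mapsto\mathrm{cvx}(\mathbf z)\subset C(Z)^*$. The problem is that everything after that is left as an announced hope: you explicitly flag the passage from these continuity constraints to a contradiction as ``the main obstacle'' and only ``anticipate'' that the bookkeeping with the coordinate functions $\phi_d$ extracts something ruled out by \cite[Theorem 5.2]{rs_cont-sel}/\cite[Theorem 2]{zbMATH03569401}. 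That is precisely the content of the theorem, so as written the proposal has a genuine gap at its core rather than a proof.

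The gap is in fact easier to close than your sketch suggests, and this is what the paper does: restrict. The subspace $(\Delta^+)_{[3]}$ sits closed inside $Z_{[n]}$ (the paper records this as a pullback square), and for $\mathbf z\subseteq\Delta^+$ the measure $\nu_{\mathbf z}$ is supported on $\mathbf z\subseteq\Delta^+$, so restricting your $\Psi$ to $(\Delta^+)_{[3]}$ already \emph{is} a continuous selection of $\mathbf z\mapsto\mathrm{cvx}(\mathbf z)$ over the truncated power-set of a one-point compactification of an uncountable discrete space --- and the nonexistence of exactly such measure-valued (convex-hull-valued) selections is what \cite[Theorem B.5.2]{rs_cont-sel} (rephrasing \cite[Theorem 2]{zbMATH03569401} and its proof) asserts; no further extraction is needed. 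Two cautions about the route you sketch instead: (a) the cited obstruction lives on the $3$-element-subset space, not on pairs, and there is no continuous way to turn a measure-valued selection into a point-valued ``weak selection'' on $(\alpha D)_{[2]}$, so the $(\alpha D)_{[2]}$ branch of your plan would not reduce to the quoted results; (b) if you prefer to avoid quoting the selection-theoretic theorem and argue by hand along your nets $\{d_1,d_2,d\}\to\{d_1,d_2,\infty\}$, you are essentially re-proving M\"agerl's argument, whose engine is the cardinality/pigeonhole principle recorded in the paper as \Cref{eq:many.large.sets}; that is a legitimate alternative, but it has to be carried out (cofiniteness of the relevant sets $D_{\delta;\varepsilon}$, then the pigeonhole step), not merely anticipated.
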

\begin{proof}
  The map
  \begin{equation*}
    X\ni {\bf z}=\left\{z_1,z_2,z_3\right\}
    \xmapsto{\quad}
    C_{{\bf z}}
    :=
    \text{convex hull }\mathrm{cvx}\{z_i\}
    \cong
    \text{state space }\cS(\pi^{-1}({\bf z}))
  \end{equation*}
  is LSC for the weak$^*$ topology on the dual space $C\left(Z\right)^*$, and an expectation $C\left(Y\right)\xrightarrow{E}C\left(X\right)$ would effect a {\it continuous selection} \cite[pp.711-712]{zbMATH06329568}
  \begin{equation}\label{eq:cont.sel.z3}
    X\ni x={\bf z}
    \xmapsto{\quad}
    E_{{\bf z}}
    \in
    C_{{\bf z}}
  \end{equation}
  subordinate to it. That such a continuous selection cannot exist under for
  \begin{equation*}
    Z=\text{one-point compactification }\Delta^+\text{ of $\Delta$}
    ,\quad
    |\Delta|>\aleph_0
  \end{equation*}
  and $n=3$ follows from \cite[Theorem B.5.2]{rs_cont-sel} (which in turn rephrases a combination of \cite[Theorem 2]{zbMATH03569401} and its proof), applied to the space $Z$ embedded homeomorphically in the weak$^*$-topologized $C\left(Z\right)^*$ as the space of characters of the $C^*$-algebra $C\left(Z\right)$. To conclude, simply note that a selection for $Z\supseteq \Delta^+$ and $n\ge 3$ would entail one for $Z=\Delta^+$ and $n=3$ by restriction, the square
  \begin{equation*}
    \begin{tikzpicture}[>=stealth,auto,baseline=(current  bounding  box.center)]
      \path[anchor=base] 
      (0,0) node (l) {$(\Delta^+)^{\subseteq}_{[3]}$}
      +(2,.5) node (u) {$Z_{[n]}^{\subseteq}$}
      +(2,-.5) node (d) {$(\Delta^+)_{[3]}$}
      +(4,0) node (r) {$Z_{[n]}$}
      ;
      \draw[right hook->] (l) to[bend left=6] node[pos=.5,auto] {$\scriptstyle $} (u);
      \draw[->>] (u) to[bend left=6] node[pos=.5,auto] {$\scriptstyle $} (r);
      \draw[->>] (l) to[bend right=6] node[pos=.5,auto,swap] {$\scriptstyle $} (d);
      \draw[right hook->] (d) to[bend right=6] node[pos=.5,auto,swap] {$\scriptstyle $} (r);
    \end{tikzpicture}
  \end{equation*}
  being a {\it pullback} \cite[Definition 11.8]{ahs} in the category of compact Hausdorff spaces.
\end{proof}


\begin{remarks}\label{res:neg.res}
  \Cref{th:pt.non.metr} answers a few questions posed in the literature.
  
  \begin{enumerate}[(1),wide]  

  \item In \cite[discussion following Remarque on p.155]{blnch}, having argued for the existence of appropriately faithful expectations onto $C\left(X\right)$ definable on continuous {\it separable } $C\left(X\right)$-algebras $C\left(X\right)\to A$ in the sense of \cite[Definition 1.5]{zbMATH04056334}, Blanchard observes that it would be interesting to give a counterexample when the fibers $A_x$ are non-separable. The examples provided by \Cref{th:pt.non.metr} do slightly more: the fibers are in that case finite-dimensional and abelian (1, 2 or 3-dimensional), and it is rather the global algebra $A=C\left(Y\right)$ itself that is not separable.

  \item In the same spirit there is \cite[Problem 3.11]{bg_cx-exp}, answered by \Cref{th:pt.non.metr} negatively in the generality in which that question was asked: for $Z$, $X:=Z_{[n]}$ and $Y:=Z_{[n]}^{\subseteq}$ as in the statement, $C\left(X\right)\lhook\joinrel\to C\left(Y\right)$ is a subhomogeneous $C\left(X\right)$-algebra. Nevertheless, there are no finite-index conditional expectations (for indeed, there are no expectations at all).  
  \end{enumerate}
\end{remarks}

We pause briefly to remark that yet another way in which the examples provided by \Cref{th:pt.non.metr} are ill-behaved. Note that for continuous bundles (as virtually all are, in the present paper) the strata $X_{\le d}$ of \Cref{eq:strata} are closed \cite[Theorem 18.3]{gierz_bdls} and hence $X_d$ are {\it locally} closed (i.e. \cite[\S I.3.3]{bourb_top_en_1} open in their respective closures). The ``bottom'' stratum $X_d$, $d=\min_x\dim \cA_x$ of a continuous subhomogeneous $C^*$ bundle is always closed, but in the class of examples just mentioned all higher strata fail to even be {\it normal}, i.e. \cite[Definition 15.1]{wil_top} disjoint closed subsets thereof will not always have disjoint neighborhoods. In particular \cite[Theorem 20.10]{wil_top}, those strata will also fail to be {\it paracompact} \cite[Definition 20.6]{wil_top}.

\begin{lemma}\label{le:strt.not.norm}
  Let $Z$ be a compact Hausdorff space as in \Cref{th:pt.non.metr} and $n\in \bZ_{>0}$.
  
  None of the strata $\left(Z_{[n]}\right)_d$, $2\le d\le n$ associated to the branched cover \Cref{eq:zn.corresp} are normal.
\end{lemma}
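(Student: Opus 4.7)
The plan is to prove non-normality of each stratum $\left(Z_{[n]}\right)_d$ for $2 \le d \le n$ by exhibiting two disjoint closed subsets which cannot be separated by disjoint open sets, via a counting argument in the style of Jones' lemma.

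First I would reduce to the case $Z = \Delta^+$ for $\Delta$ uncountable discrete: since $\Delta^+$ is compact Hausdorff it embeds closedly in $Z$, and $\{F \in Z_{[n]} : F \subseteq \Delta^+\}$ is closed in the Vietoris topology, so $\left((\Delta^+)_{[n]}\right)_d$ sits as a closed subspace of $\left(Z_{[n]}\right)_d$; non-normality of the former then transfers upward, as closed subspaces of normal spaces are normal. Within that stratum, I would fix $d-2$ distinct auxiliary points $c_1, \ldots, c_{d-2} \in \Delta$ (empty list when $d = 2$), partition the still-uncountable set $\Delta \setminus \{c_1, \ldots, c_{d-2}\}$ into two uncountable pieces $A \sqcup B$, and consider the candidate closed subsets
\[
  D_A := \{\{\alpha, c_1, \ldots, c_{d-2}, \infty\} : \alpha \in A\},
  \quad
  D_B := \{\{\beta, c_1, \ldots, c_{d-2}, \infty\} : \beta \in B\}.
\]

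A short Vietoris-topology calculation shows $D_A$ and $D_B$ are closed and discrete in the stratum: a would-be limit point $F^*$ has $|F^*| = d$ and must contain every $c_i$ and $\infty$ (else a basic open set of the form $\{F : F \subseteq \Delta^+ \setminus \{c_i\}\}$ separates $F^*$ from $D_A$), leaving one remaining slot that must land in $A$ for membership in $D_A$. Now suppose there exist disjoint open $U \supseteq D_A$, $V \supseteq D_B$. For each $\alpha \in A$ there is a finite $F_\alpha \subseteq \Delta$ such that the basic Vietoris neighborhood at $\{\alpha, c_1, \ldots, c_{d-2}, \infty\}$ built from singletons at $\alpha, c_1, \ldots, c_{d-2}$ together with the cofinite neighborhood $\Delta^+ \setminus F_\alpha$ of $\infty$ sits in $U$; analogously produce $F_\beta$ for $\beta \in B$. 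The point $\{\alpha, \beta, c_1, \ldots, c_{d-2}\}$ belongs to both basic neighborhoods whenever $\beta \notin F_\alpha$ and $\alpha \notin F_\beta$, which is incompatible with $U \cap V = \emptyset$; hence for every pair $(\alpha, \beta) \in A \times B$ one has $\beta \in F_\alpha$ or $\alpha \in F_\beta$. The finishing diagonalization: for fixed $\alpha$, $\{\beta \in B : \alpha \notin F_\beta\} \subseteq F_\alpha$ is finite, so choosing a countable sequence $\alpha_i \in A$ and then some $\beta^* \in B$ outside the countable union of exceptional sets gives $\alpha_i \in F_{\beta^*}$ for all $i$, contradicting the finiteness of $F_{\beta^*}$.

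The main obstacle I anticipate is the bookkeeping around closedness/discreteness of $D_A, D_B$: unfolding the Vietoris neighborhood base at a point $\{\alpha, c_1, \ldots, c_{d-2}, \infty\}$, restricting it to the $d$-element stratum, and verifying that no $F^*$ outside $D_A$ is a limit point requires some care due to the several open sets involved in a Vietoris basic neighborhood, though no individual step is hard. Once the setup is in place, the combinatorial contradiction is routine.
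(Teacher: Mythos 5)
Your proposal is correct and follows essentially the same route as the paper: reduce to $Z=\Delta^+$, fix $d-2$ auxiliary points (the paper packages this as an explicit reduction to the stratum $\Delta^+_{(2)}$ of pairs), take the two uncountable families of $d$-sets through $\infty$ indexed by a partition $A\sqcup B$ of $\Delta$, and derive the contradiction from the same cofinite-set diagonalization that the paper isolates as \Cref{eq:many.large.sets} with $\kappa=\aleph_0$. The only differences are presentational (working directly in the $d$-stratum, and phrasing the clash via the dichotomy $\beta\in F_\alpha$ or $\alpha\in F_\beta$ rather than via cluster points), so no further comparison is needed.
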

\begin{proof}
  Said strata $\left(Z_{[n]}\right)_d$ are the respective collections of subsets of precise cardinality $d$:
  \begin{equation*}
    \left(Z_{[n]}\right)_d
    =
    Z_{(d)}
    :=
    \left\{{\bf z}\subseteq Z\ :\ |{\bf z}|=d\right\}.
  \end{equation*}
  A closed embedding
  \begin{equation*}
    \Delta^+:=\text{one-point compactification $\Delta\sqcup\{\infty\}$ of discrete }\Delta
    \subseteq Z
  \end{equation*}
  induces one between the corresponding collections of $d$-element subsets; because \cite[Theorem 15.4(a)]{wil_top} normality transports over to closed subspaces, it will be enough to assume $Z=\Delta^+$ for discrete uncountable $\Delta$. To simplify matters further, note that any fixed $(d-2)$-element subset $D\subset \Delta$ gives a closed embedding
  \begin{equation*}
    \left(\Delta\setminus D\right)^+_{(2)}
    \ni
    {\bf z}
    \xmapsto{\quad}
    \left({\bf z}\sqcup D\right)
    \in
    \Delta^+_{(d)};
  \end{equation*}
  substituting $\Delta\setminus D$ for $\Delta$ and $2$ for $d$, it will thus suffice to show that the set $\Delta^+_{(2)}$ of distinct pairs in $\Delta^+$ is not normal.

  More specifically, partition
  \begin{equation*}
    \Delta=\Delta_1\sqcup \Delta_2
    ,\quad
    |\Delta_i|=|\Delta|
  \end{equation*}
  so that $\{(\infty,\delta)\ |\ \delta\in \Delta_i\}$ are closed and disjoint. I claim that they do not have disjoint neighborhoods; even more precisely, every neighborhood of one clusters at some point of the other. To see this, note first that a neighborhood of $(\infty,\delta)$, $\delta\in\Delta$ will contain a set of the form
  \begin{equation}\label{eq:ddel2}
    D_{\delta\mid 2}
    :=
    \left\{\{\infty,\delta\}\cup\bigcup_{\delta'\in D_{\delta}}\left\{\delta',\delta\right\}\right\}
    ,\quad
    D_{\delta}\underset{\text{cofinite}}{\subseteq}\Delta.
  \end{equation}
  Suppose we prove that
  \begin{equation}\label{eq:some.del1.inf}
    \exists \delta_1\in \Delta_1
    \quad:\quad
    \sharp\left\{\delta\in \Delta_2\ :\ \{\delta_1,\delta\}\in D_{\delta}\right\}\ge \aleph_0.
  \end{equation}
  Every neighborhood of $(\infty,\Delta_1)$ (hence also of $\{(\infty,\delta)\ |\ \delta\in \Delta_i\}$) will then intersect the neighborhood $\bigcup_{\delta\in \Delta_2}D_{\delta\mid 2}$ of $(\infty,\Delta_2)$, concluding the proof.

  It thus remains to settle \Cref{eq:some.del1.inf}. To that end, note that the sets $D_{\delta}\cap \Delta_1$ with $\delta\in \Delta_2$ are cofinite subsets of the uncountable set $\Delta_1$, indexed over a set of the same cardinality. Relabeling both the indexing set $\Delta_2$ and the main set of interest $\Delta_1$ by $\Delta$, the claim \Cref{eq:some.del1.inf} follows from the following more general remark by specialization at $\kappa:=\aleph_0$: if $\kappa$ is an infinite cardinal, then
  \begin{equation}\label{eq:many.large.sets}
    \left.
      \begin{aligned}
        |\Delta|
        &\ge \kappa^+\text{  ({\it cardinal successor} \cite[p.29]{jech_st})}\\
        D_{\delta}
        &\subseteq \Delta
          ,\quad
          |\Delta\setminus D_{\delta}|<\kappa
          ,\quad
          \delta\in \Delta
      \end{aligned}
    \right\rbrace
    \xRightarrow{\quad}
    \exists\delta_0
    \ :\
    \sharp\left\{\delta\ :\ D_{\delta}\ni \delta_0\right\}\ge \kappa.
  \end{equation}
  Indeed, if
  \begin{equation*}
    \sharp
    \left(
      D_i:=\left\{\delta\ :\ D_{\delta}\ni \delta_i\right\}
    \right)
    <\kappa    
    ,\quad i\in \kappa
  \end{equation*}
  then $D_{\delta}$ would miss all $\delta_i$, $i\in \kappa$ whenever $\delta$ belongs to (the $\kappa^+$-sized set) $\Delta\setminus\bigcup_i D_i$.
\end{proof}

\begin{remarks}\label{res:same.comb.princ}
  \begin{enumerate}[(1),wide]
    
  \item The spaces $Z_{(d)}$ of \Cref{le:strt.not.norm}, while not normal, satisfy the ``next best'' separation axiom of {\it complete regularity} \cite[Definition 14.8]{wil_top}: points $x$ are separated from closed subsets $F\not\ni x$ by functions, in the sense that there is a globally-defined continuous function vanishing at $x$ and taking the value 1 over $F$. Indeed, $Z_{(d)}$ are by definition subspaces of the compact Hausdorff $Z_{[n]}$ and complete regularity is inherited by subspaces \cite[Theorem 14.10(a)]{wil_top}. 

  \item There are some family resemblances between the completely regular non-normal spaces $\Delta^+_{(2)}$ and other examples in the literature featuring this combination of separation properties:
    \begin{itemize}[wide]
    \item the {\it Niemytzki tangent disk topology} of  \cite[Example 82]{ss_countertop}: the upper half-plane $\{(x,y)\ :\ y\ge 0\}$ with the usual topology on the interior $\{y>0\}$ and open neighborhoods
      \begin{equation*}
        \{(x,0)\}
        \sqcup
        \bigg(\text{interior of some disk tangent to $\{y=0\}$ at $(x,0)$}\bigg). 
      \end{equation*}

    \item Similarly, the subspace 
      \begin{equation*}
        \{y=0\}\cup\left\{\left(\frac {m}{n^2},\ \frac 1{n}\right)\ :\ m\in \bZ,\ n\in \bZ_{>0}\right\}
      \end{equation*}
      of $\bR^2$ exhibited in \cite{MR20769}, discrete away from the $x$-axis and with local neighborhoods around $(x,0)$ consisting of triangles
      \begin{equation*}
        \left\{(u,v)\ :\ 0\le v\le\frac 1n,\ |u-x|\le v\right\}.
      \end{equation*}
    \end{itemize}
    In all cases the space in question contains an exceptional locus (the $x$-axis in the latter two examples, $\{\{\infty,\delta\}\in \Delta^+_{(2)}\}$ in the former) whose points have neighborhoods with ``large overlaps''.     
    
    
  \item Having cited \cite{zbMATH03569401,MR450972} in connection to \Cref{th:pt.non.metr}, we note that \cite[Lemma 1]{zbMATH03569401} (also \cite[proof of Theorem B.1, part 1]{MR450972}) is driven by the same combinatorial principle \Cref{eq:many.large.sets} employed in the proof of \Cref{le:strt.not.norm}. That result (in its \cite[Lemma 1]{zbMATH03569401} phrasing) states that given uncountable $\Delta$ and a map
    \begin{equation*}
      \Delta_{[2]}\xrightarrow{\quad\varphi\quad}\Delta
      \quad\text{with}\quad
      \varphi(D)\in D
      ,\quad
      \forall D\in \Delta_{[2]}
    \end{equation*}
    there is some $\delta_0$ for which $\varphi(\left\{\delta,\delta_0\right\})=\delta_0$ for infinitely many $\delta$. This is a consequence of \Cref{eq:many.large.sets} (with $\kappa:=\aleph_0$): assuming the contrary, set
    \begin{equation*}
      D_{\delta}
      :=
      \left\{\delta'\in \Delta\ :\ \varphi\left(\left\{\delta,\delta'\right\}\right)=\delta'\right\},
    \end{equation*}
    The assumed cofiniteness of the $D_{\delta}$ plugs this into \Cref{eq:many.large.sets}, whence a contradiction.

  \item The same \Cref{eq:many.large.sets} is also, ultimately, what the proof of \cite[Proposition]{zbMATH03569401} relies on. Fleshing out slightly the abbreviated sketch given in \cite[Theorem 5.2]{rs_cont-sel}: assume a selection \Cref{eq:cont.sel.z3} for $X=\Delta^+_{[3]}$, where as usual, $\Delta^+=\Delta\sqcup\{\infty\}$ is the one-point compactification of an uncountable discrete $\Delta$. For each $\delta\in \Delta$, the set
    \begin{equation*}
      D_{\delta;\varepsilon}
      :=
      \left\{
        \delta'\in \Delta
        \ :\
        E_{\left\{\delta,\delta',\infty\right\}}
        =
        c\delta+c'\delta'+c_{\infty}\infty
        ,\
        \left\|(c,c',c_{\infty})-\left(\frac 12,\frac 14,\frac 14\right)<\varepsilon\right\|
      \right\}
    \end{equation*}
    is first proven cofinite, so that (by \Cref{eq:many.large.sets}) some $\delta'$ is contained in infinitely many $D_{\delta;\varepsilon}$. But then, assuming $\varepsilon>0$ sufficiently small, $D_{\delta';\varepsilon}$ will omit those infinitely many $\delta$; this contradicts the aforementioned cofiniteness. 
  \end{enumerate}
  In summary: the phenomenon underlying the negative result of \Cref{th:pt.non.metr} also manifests as lack of normality (and hence paracompactness) for the strata of the same (type of) bundle. 
\end{remarks}

\Cref{th:pt.non.metr} applies of course to one-point compactifications $\Delta^+$ of uncountable discrete $\Delta$, but also, as noted in the proof of \cite[Theorem 2]{zbMATH03569401} (citing \cite[p. 499, Proof of Theorem4]{zbMATH03209749}), to non-metrizable {\it dyadic} spaces \cite[Definition A4.30]{hm5}: (compact Hausdorff) continuous images of {\it Cantor cubes} $\{0,1\}^{\aleph}$ for some cardinal $\aleph$. In fact, we can leverage \Cref{th:pt.non.metr} into a characterization of metrizability for such spaces (one among the many others known).

\begin{theorem}\label{th:char.metr}
  For a dyadic compact Hausdorff space $Z$ the following conditions are equivalent.

  \begin{enumerate}[(a),wide]
  \item\label{item:th:char.metr:metr} $Z$ is metrizable.

  \item\label{item:th:char.metr:metr.sympow} All (some) truncated power-sets $Z_{[n]}$ are metrizable. 

  \item\label{item:th:char.metr:gen.br} For every (some) $n\in \bZ_{\ge 3}$ every subhomogeneous $C\left(Z_{[n]}\right)$-algebra is a non-commutative branched cover.

  \item\label{item:th:char.metr:gen.exp} For every (some) $n\in \bZ_{\ge 3}$ every subhomogeneous $C\left(Z_{[n]}\right)$-algebra admits a conditional expectation.

  \item\label{item:th:char.metr:part.br} For every (some) $n\in \bZ_{\ge 3}$ the embedding $C\left(Z_{[n]}\right)\le C\left(Z_{[n]}^{\subseteq}\right)$ dual to \Cref{eq:zn.corresp} is a non-commutative branched cover.

  \item\label{item:th:char.metr:part.exp} For every (some) $n\in \bZ_{\ge 3}$ the embedding $C\left(Z_{[n]}\right)\le C\left(Z_{[n]}^{\subseteq}\right)$ admits an expectation.
  \end{enumerate}
\end{theorem}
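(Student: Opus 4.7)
I would organize the six conditions along the cycle
\[
(a)\Longleftrightarrow (b)\Longrightarrow (c)\Longrightarrow (d),(e)\Longrightarrow (f)\Longrightarrow (a),
\]
with all implications understood in the ``for every $n\ge 3$'' sense on the way down and in the ``for some $n\ge 3$'' sense on the way back, so that both quantifier variants collapse together.

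\textbf{Step 1: $(a)\Leftrightarrow (b)$.} The forward implication invokes the classical fact that for compact Hausdorff $Z$ the Vietoris hyperspace $2^Z$ of closed subsets is metrizable precisely when $Z$ is (the Hausdorff metric furnishes an explicit compatible metric); since $Z_{[n]}$ is a closed subspace of $2^Z$ (upper semicontinuity of $Z_{[n]}\subseteq 2^Z$, already used in the proof of \Cref{le:inc.corr.brnch}), metrizability descends. For the converse, $z\mapsto \{z\}$ realizes $Z$ as a closed subspace of $Z_{[n]}$, and metrizability passes to closed subspaces.

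\textbf{Step 2: $(a)\Rightarrow (c)$.} Given (a), Step 1 gives (b); hence $Z_{[n]}$ is compact metrizable for all $n$. Theorem 2.12 of \cite{2409.03531v1} (quoted verbatim in the introduction) then ensures that every subhomogeneous continuous unital $C^*$ bundle with non-zero fibers over the metrizable base $Z_{[n]}$ is a quantum branched cover.

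\textbf{Step 3: routine specializations.} The implications $(c)\Rightarrow (d)$ and $(c)\Rightarrow (e)$ are immediate, since a finite-index expectation is in particular an expectation, and since the bundle $Z_{[n]}^{\subseteq}\twoheadrightarrow Z_{[n]}$ of \Cref{le:inc.corr.brnch} produces a subhomogeneous $C(Z_{[n]})$-algebra $C(Z_{[n]}^{\subseteq})$. The implications $(d)\Rightarrow (f)$ and $(e)\Rightarrow (f)$ are similar specializations to that same embedding.

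\textbf{Step 4: $(f)\Rightarrow (a)$, the main obstacle.} I argue contrapositively. If $Z$ is dyadic but not metrizable, then by the classical Efimov-type result cited in the paragraph preceding the theorem (\cite[proof of Theorem 2]{zbMATH03569401} quoting \cite[p.~499, proof of Theorem 4]{zbMATH03209749}), $Z$ contains a homeomorphic copy of the Alexandroff compactification $\Delta^+$ of some uncountable discrete set $\Delta$. \Cref{th:pt.non.metr} then applies to $Z$ for every $n\ge 3$, showing that $C(Z_{[n]})\hookrightarrow C(Z_{[n]}^{\subseteq})$ admits no conditional expectation whatsoever, contradicting the ``some $n$'' form of (f).

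The only genuinely non-formal step is Step 4, whose weight is entirely carried by two results already in hand: \Cref{th:pt.non.metr} and the classical embedding theorem for non-metrizable dyadic spaces. Every other implication is either a hyperspace-topology fact (Step 1), a direct appeal to the earlier metrizable-base theorem (Step 2), or a tautological specialization (Step 3).
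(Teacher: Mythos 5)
Your proposal is correct and follows essentially the same route as the paper: metrizability transfers between $Z$ and $Z_{[n]}$, the positive direction is delegated to \cite[Theorem 2.12]{2409.03531v1}, the intermediate implications are formal specializations, and the return implication rests on \Cref{th:pt.non.metr} together with Efimov's embedding of $\Delta^+$ into non-metrizable dyadic spaces. The only (inessential) divergence is in the transfer of metrizability to $Z_{[n]}$, which you obtain via the Hausdorff metric on the hyperspace $2^Z$ whereas the paper simply notes that $Z_{[n]}$ is a compact Hausdorff quotient of the metrizable $Z^n$.
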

\begin{proof}
  That \Cref{item:th:char.metr:metr} $\Leftrightarrow$ \Cref{item:th:char.metr:metr.sympow} is obvious: $Z$ is a subspace of every $Z_{[n]}$ and conversely, $Z_{[n]}$ is a quotient of $Z^n$ (metrizability transports along both embeddings and quotient maps between compact Hausdorff spaces).

  \Cref{item:th:char.metr:metr.sympow} $\Rightarrow$ \Cref{item:th:char.metr:gen.br} is a consequence of \cite[Theorem 2.12]{2409.03531v1}, and the `every' branch of \Cref{item:th:char.metr:gen.br} obviously implies \Cref{item:th:char.metr:gen.exp,item:th:char.metr:part.br,item:th:char.metr:part.exp} in all of their versions.

  Finally, \Cref{th:pt.non.metr} proves that the weaker version of \Cref{item:th:char.metr:part.exp} implies \Cref{item:th:char.metr:metr}, for non-metrizable dyadic spaces contain one-point compactifications of uncountable discrete spaces (as noted in \cite[paragraph preceding Remark 2]{zbMATH03569401}, citing \cite[p.499, Proof of Theorem 4]{zbMATH03209749}).
\end{proof}

\begin{example}\label{ex:lex.sq}
  Let $Z$ be the unit square $[0,1]^2$ topologized as in \cite[Example 48]{ss_countertop}, with the {\it order topology} \cite[Example 39]{ss_countertop}:
  \begin{equation*}
    (a,b)<(c,d)
    \iff
    a<c
    \quad\text{or}\quad
    a=c,\ b<d.
  \end{equation*}
  The embedding $C\left(Z_{[n]}\right)\le C\left(Z^{\subseteq}_{[n]}\right)$ not only admits a (finite-index) conditional expectation, but in fact splits in the category of $C^*$-algebras (i.e. has a left inverse): $Z$ is totally ordered, so the surjection \Cref{eq:zn.corresp} admits the splitting which assigns to each $(\le n)$-element subset of $Z$ its smallest (say) element. $Z$ is not metrizable though, as observed in \cite[Example 48, item 4.]{ss_countertop}: it is compact Hausdorff but not separable.
\end{example}

\begin{remarks}\label{res:ordrbl}
  \begin{enumerate}[(1),wide]
  \item\label{item:res:ordrbl:lex.not.dy} It is clear from \Cref{th:char.metr} and \Cref{ex:lex.sq} that the latter's lexicographically-ordered and topologized square $Z:=[0,1]^2$ is not dyadic, but one of course need not rely on the theorem for this: checking that $Z$ contains no one-point compactifications of discrete uncountable spaces is an easy matter (an uncountable subset will by necessity have more than one {\it cluster point} \cite[Definition 4.9]{wil_top}), so, being non-metrizable, $Z$ also cannot be dyadic by the already-mentioned result \cite[Proof of Theorem 4]{zbMATH03209749} of Efimov's.

    Alternatively: $Z$ is {\it first countable} by \cite[Example 48 4.]{ss_countertop} (in he sense \cite[p.7]{ss_countertop} that points have countable neighborhood bases), whence it would also be {\it second} countable (have a countable basis) and hence \cite[Theorem 34.1]{mnk} metrizable if it were dyadic \cite[Theorem 4]{zbMATH03222595}.

    Or again: Cantor cubes satisfy the {\it countable chain condition} (\cite[Chapter 5, Problem O(f)]{kel-top}: collections of mutually-disjoint non-empty open subsets are at most countable), whereas $Z$ does not.

  \item\label{item:res:ordrbl:mich.ord} The splitting of the $C^*$ morphism $C\left(Z_{[n]}\right)\le C\left(Z^{\subseteq}_{[n]}\right)$ of \Cref{ex:lex.sq} is a particular instance of \cite[Theorem 1.9]{zbMATH03067268}, noting that for Hausdorff spaces $X$ totally orderable in such a manner that (a) the order topology is at most as fine as the original topology and (b) closed sets have smallest elements there is a continuous selection
    \begin{equation*}
      \left(\text{compact subsets of $Z$},\ \text{Vietoris}\right)
      \ni K
      \xmapsto{\quad}
      \left(z_K\in K\right)
      \in Z;
    \end{equation*}
    that, in turn, provides a splitting
    \begin{equation*}
      Z_{[n]}\ni {\bf z}
      \xmapsto{\quad}
      (z_{\bf z},\ {\bf z})
      \in
      Z_{[n]}^{\subseteq}
    \end{equation*}
    of \Cref{eq:zn.corresp}. The main feature in the example, in short, is the total orderability of the topology.
  \end{enumerate}
\end{remarks}

By way of supplementing \Cref{th:pt.non.metr}, \Cref{ex:pt_2-3_sheets,ex:3.sheeted.disk} below attempt to identify some ways in which the {\it proof} of \cite[Theorem 4.3]{pt_brnch} does not function as claimed. In both cases, however, its {\it conclusion} is not difficult to prove valid (and indeed, in both examples the base space $X$ is metrizable and hence the conclusion must \cite[Theorem 2.12]{2409.03531v1} hold).

\begin{example}\label{ex:pt_2-3_sheets}
  We describe a branched cover $Y\to X$ in the sense of \cite[Definition 1.2]{pt_brnch}, ignoring any compactness concerns; it will be obvious that one can always restrict the map $Y\xrightarrowdbl{}X$ about to be described to a compact subset of $X$ (whose preimage in $Y$ will also be compact), so this is a non-issue.

  The ``larger'' space $Y$ will be the following subset of $\bR^3$ (employing the usual Cartesian coordinate system):
  \begin{equation*}
    \begin{aligned}
      Y
      &:=
        Y_{\ell}\cup Y_r\\
      Y_{\ell}
      &=
        \left\{(x,y,z)\in \bR^3\ |\ x\le 0,\ y=z\right\}
        \quad(\text{a half-plane})\\
      Y_{r}
      &=
        \left\{z^2=x\right\}
        \quad(\text{a {\it parabolic cylinder} \cite[\S 5.6, Exercise 6]{alb_solid-an-geom_1949}}). 
    \end{aligned}    
  \end{equation*}
  The branched cover, with fibers of cardinality $\le 3$, is the composition
  \begin{equation*}
    \begin{tikzpicture}[>=stealth,auto,baseline=(current  bounding  box.center)]
      \path[anchor=base] 
      (0,0) node (l) {$Y$}
      +(3,.5) node (u) {$\bR^2\cong \left\{z=0\right\}$}
      +(7,0) node (r) {$X:=\bR^2/\left(\bZ/2\right)$,}
      ;
      \draw[->>] (l) to[bend left=6] node[pos=.5,auto] {$\scriptstyle \text{orthogonal projection along the $z$-axis}$} (u);
      \draw[->>] (u) to[bend left=6] node[pos=.5,auto] {$\scriptstyle \text{quotient map}$} (r);
      \draw[->>] (l) to[bend right=6] node[pos=.5,auto,swap] {$\scriptstyle \pi$} (r);
    \end{tikzpicture}
  \end{equation*}
  with the generator of $\bZ/2$ acting by negating $x$-coordinates: $(x,y,z)\mapsto (-x,y,z)$. Note that the {\it first} of the two maps is certainly not open, but the overall composition is.

  In the language of \cite[p.340]{pt_brnch}, $X$ decomposes into strata $X_k$, $k=1,2,3$, consisting respectively of points with preimage of cardinality exactly $k$:
  \begin{equation*}
    \begin{aligned}
      X_1
      &=
        \left\{\text{image of the origin}\right\},\\
      X_2
      &=
        \left\{\text{image of the $y$-axis}\right\}\setminus X_1, and\\
      X_3
      &=
        X\setminus \left(\widehat{X}_2:=X_1\cup X_2\right).
    \end{aligned}    
  \end{equation*}
  The main feature we are interested in is that {\it every} $X_i$ consists of cluster points of {\it every} $X_j$, $j>i$. We relegate the substance of the example to \Cref{le:23.sheets}.
\end{example}

\begin{lemma}\label{le:23.sheets}
  Let $Y\xrightarrowdbl{\pi}X$ be the branched cover of \Cref{ex:pt_2-3_sheets}.

  The inner product $\braket{f\mid g}$ defined in \cite[proof of Theorem 4.3]{pt_brnch} for functions defined on preimages
  \begin{equation*}
    \pi^{-1}(U)\subseteq Y
    ,\quad
    \text{sufficiently small neighborhood }
    U\ni p_0:=\pi(\text{origin})
  \end{equation*}
  is not, in general, continuous at points on $\pi^{-1}(X_1)$. 
\end{lemma}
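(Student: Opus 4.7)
My plan is to follow PT's recursive, stratum-by-stratum construction of \(\langle f\mid g\rangle\) applied to the explicit fiber data of \Cref{ex:pt_2-3_sheets}, then pinpoint the failure of continuity with a specific test function. Near \(p_0\) the top stratum \(X_3=\{(u,v):u>0\}\) admits three continuous local sections \(s_1(u,v)=(-u,v,v)\), \(s_2(u,v)=(u,v,\sqrt{u})\), and \(s_3(u,v)=(u,v,-\sqrt{u})\), while the middle stratum \(X_2=\{(0,v):v\neq 0\}\) admits two local sections \(t_1(0,v)=(0,v,v)\) and \(t_2(0,v)=(0,v,0)\). PT's recursion then outputs the uniformly-averaged expressions \(\tfrac{1}{3}\sum_{i=1}^{3}(f\bar g)(s_i)\) on \(X_3\) and \(\tfrac{1}{2}\sum_{j=1}^{2}(f\bar g)(t_j)\) on \(X_2\), linked across their common boundary by the collapse pattern \(s_1\to t_1\), \(s_2,s_3\to t_2\).

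As a witness I would use \(f(x,y,z):=z-y\in C(Y)\), which vanishes identically on the half-plane \(Y_\ell\). A direct evaluation gives
\[
\langle f\mid f\rangle(u,v)=\tfrac{1}{3}\bigl[(\sqrt{u}-v)^{2}+(\sqrt{u}+v)^{2}\bigr]=\tfrac{2(u+v^{2})}{3}\quad\text{on }X_3,
\]
together with \(\langle f\mid f\rangle(0,v)=\tfrac{v^{2}}{2}\) on \(X_2\). The limit from the \(X_3\) side at a point \((0,v_0)\in X_2\) with \(v_0\neq 0\) yields \(\tfrac{2v_0^{2}}{3}\), which differs from the intrinsic \(X_2\) value \(\tfrac{v_0^{2}}{2}\) by \(\tfrac{v_0^{2}}{6}\); since \(X_2\setminus X_1\) clusters at \(p_0\), this discrepancy surfaces in every neighborhood of \(p_0\).

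Consequently PT's two local recipes cannot be glued into a single continuous function on any neighborhood of \(p_0\): any prescribed value at the fiber \(\pi^{-1}(X_1)\) is surrounded by points of \(X_2\setminus X_1\) at which two mutually exclusive formulas would have to coincide, so no continuous extension to a neighborhood of \(\pi^{-1}(X_1)\) exists. The main obstacle is routine bookkeeping: one must verify that uniform averaging is indeed what PT's recursion assigns on each stratum (immediate, since \(s_i\) and \(t_j\) are determined up to labelling by the explicit geometry of \(Y\)) and that the \(\tfrac{v_0^{2}}{6}\) defect cannot be absorbed by a relabelling or a partition of unity---equally immediate given its symmetry under the \(Y_r\)-reflection \(z\mapsto -z\).
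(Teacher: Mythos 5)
Your argument is correct and is essentially the paper's own: you step into PT's recursion, read off the uniform $\tfrac12$- and $\tfrac13$-averages on $X_2$ and $X_3$, and observe that the limit from $X_3$ puts weights $\bigl(\tfrac13,\tfrac23\bigr)$ on $(p_\ell,p_r)$ instead of $\bigl(\tfrac12,\tfrac12\bigr)$, with the mismatch occurring at $X_2$-points in every neighborhood of $p_0$. The only difference is that you make the discrepancy explicit with the witness $f=z-y$ (defect $v_0^2/6$), where the paper is content to note that the two expressions differ in general.
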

\begin{proof}
  For functions $f$ defined on (subsets of) $Y$, write
  \begin{itemize}[wide]
  \item $f_{\ell}$ for the restriction to the ``left-hand'' sheet $Y\cap \{x\le 0\}$;

  \item similarly, $f_r:=f|_{Y\cap \{x\ge 0\}}$;

  \item $f_{ru}$ (the second subscript stands for `up') for the restriction $f|_{Y\cap\{x\ge 0,z\ge 0\}}$ to the ``upper'' sheet of the parabolic cylinder $Y_r$;

  \item and $f_{rd}:=f|_{Y\cap\{x\ge 0,z\le 0\}}$. 
  \end{itemize}
  The same notation applies to preimages of points in $X$: $p_{\ell}$ is the preimage lying on the left-hand sheet, etc.
  
  We step into the proof of \cite[Theorem 4.3]{pt_brnch} at the point where, having defined the restriction $\braket{f\mid g}|_{\widehat{X_2}}$, we seek to extend it continuously across $X_3$ and hence to all of $X$.

  The point $x$ in the paragraph following \cite[(5)]{pt_brnch} will, for us, be the image $p_0\in X$ of the origin in $\bR^3$. Formula \cite[(6)]{pt_brnch} makes it clear that at $p\in X_2$ we have
  \begin{equation}\label{eq:2pts}
    \braket{f\mid g}(p) = \frac 12\left(\overline{f(p_{\ell})}g(p_{\ell}) + \overline{f(p_r)}g(p_r)\right):
  \end{equation}
  the two products are thus evaluated at a point $p_{\ell}$ on the line $\{x=0,\ y=z\}\subset Y_{\ell}$ and $\{x=0,\ z=0\}\subset Y_r$ respectively. On the other hand, for $q\in X_3$ the same \cite[(6)]{pt_brnch} dictates that we set
  \begin{equation}\label{eq:3pts}
    \braket{f\mid g}(q)
    =
    \frac 13\left(\overline{f(q_{\ell})}g(q_{\ell}) + \overline{f(q_{ru})}g(q_{ru}) + \overline{f(q_{rd})}g(q_{rd})\right).
  \end{equation}
  Now, if points $q\in X_3$ (always in the sufficiently small neighborhood $U\ni p_0$ that that portion of the proof focuses on) approach $p\in X_2$, \Cref{eq:3pts} will converge to
  \begin{equation*}
    \frac{\overline{f(p_{\ell})}g(p_{\ell}) + 2\overline{f(p_r)}g(p_r)}{3};
  \end{equation*}
  plainly, this will (in general) differ from \Cref{eq:2pts}. 
\end{proof}

There are other ways to see that the proof of \cite[Theorem 4.3]{pt_brnch} cannot go through as phrased. It will be convenient to regard the embedding $C\left(X\right)\lhook\joinrel\to C\left(Y\right)$ dual to a branched cover $Y\xrightarrowdbl{\pi} X$ of compact Hausdorff spaces as that into the section space $\Gamma(\cA)\cong C\left(Y\right)$ of a $C^*$ bundle $\cA\xrightarrowdbl{}X$. The Hilbert $C\left(X\right)$-module structure on $C\left(Y\right)$ whose existence the theorem asserts amounts to selecting a state
\begin{equation*}
  \cA_x
  \xrightarrow{\quad\varphi_x\quad}
  \bC
\end{equation*}
on each fiber $\cA_x\cong C\left(\pi^{-1}(x\right))$, $x\in X$ in such a manner that the Hilbert-space norms on the finite-dimensional fibers $\cA_x$ are uniformly (in $x$) equivalent to the original norms. One claim implicit in the inductive step in the proof of \cite[Theorem 4.3]{pt_brnch} is the fact that for every point $x\in X$, such states can be selected on small neighborhoods $U\ni x$ so as to take {\it rational} values on the minimal projections of the commutative algebras $\cA_{x'}$, $x'\in U$ (see formulas \cite[(5), (6) and (7)]{pt_brnch}). This rationality constraint will be violated in judiciously chosen examples.

\begin{example}\label{ex:3.sheeted.disk}
  Set
  \begin{equation*}
    X:=\text{unit disk}\{z\in \bC\cong \bR^2\ :\ |z|\le 1\},
  \end{equation*}
  while $Y$ will be a subspace of $\bR^3\cong \bC\times \bR$ with $Y\xrightarrowdbl{\pi}X$ the restriction of the first projection $\bC\times \bR\xrightarrowdbl{}\bC$. We proceed to define $Y$:
  \begin{itemize}[wide]
  \item it contains the two circles union
    \begin{equation*}
      \bS^1_{\pm}
      :\xlongequal{\text{respectively}}
      \bS^1\times \left\{\pm 1\right\}
      \subset
      \bS^1\times \bR;
    \end{equation*}
    
  \item and also the path
    \begin{equation}\label{eq:path.gamma}
      \gamma:=
      \left\{
        \left(e^{\pi i (\theta+1)},\ \theta\right)
        \ |\ \theta\in [-1,1]
      \right\}
    \end{equation}
    starting at $(1,-1)\in \bC\times \bR$, ending at $(1,1)$ and going once counterclockwise around the cylinder $\bS^1[-1,1]$;

  \item and finally, $Y$ is the origin-tipped cone with $\gamma\cup \bS^1_+\cup S^1_-$ as base:
    \begin{equation*}
      Y:=
      \left\{tp\ :\ t\in [0,1],\ p\in \gamma\cup \bS^1_+\cup \bS^1_-\right\}.
    \end{equation*}
  \end{itemize}
  The branched cover $Y\xrightarrow{\pi}X$ has preimages of sizes 1 up to 3:
  \begin{equation*}
    |\pi^{-1}(x)|=
    \begin{cases}
      1&\text{if $x$ is the origin }0\in\bC\cong \bR^2\\
      2&\text{if $x\in [0,1]\cong [0,1]\times \{0\}\subset \bC\times \bR$}\\
      3&\text{otherwise}
    \end{cases}
  \end{equation*}
  For points $x$ with two preimages write $x_{\pm}$ for those preimages, with the `$+$' subscript indicating the higher (larger $z$ coordinate). Similarly, for three-preimage $x$ we write $x_{\pm}$ or $x_0$ for those preimage, with $z$ coordinates ordered as $-<0<+$. 

  A conditional expectation $C\left(Y\right)\xrightarrow{E}C\left(X\right)$ gives a weak$^*$-continuous selection
  \begin{equation}\label{eq:prob.val.map}
    X\ni x
    \xmapsto{\quad}
    \mu_x\in\cat{Prob}(\pi^{-1}(x))\subset \cat{Prob}(Y)\subset C\left(Y\right)^*,
  \end{equation}
  and as
  \begin{equation*}
    x
    =
    \left(e^{\pi i (\theta+1)},\ 0\right)
    ,\quad
    \theta\in [-1,1]
  \end{equation*}
  describes a counter-clockwise revolution around the unit circle $\bS^1$ (image of the path $\gamma$ of \Cref{eq:path.gamma}), $\mu_x(x_-)$ and $\mu_x(x_0)$ must be close to $\mu_{(1,0)}((1,0)_-)$ for small $\theta$, whereas at the other extreme, for $\theta$ close to $2\pi$,
  \begin{equation*}
    \mu_x(x_+)
    \sim
    \mu_{(1,0)}((1,0)_+)
    \sim
    \mu_x(x_0)
  \end{equation*}
  instead (`$\sim$' meaning `close to'). The circle being connected and the values involved rational, they must all be constant with varying $x\in \bS^1$. But then
  \begin{equation*}
    \mu_{(1,0)}((1,0)_{\pm})
    \quad\text{and}\quad
    \mu_x(x_{\pm,0})
  \end{equation*}
  are all equal, which is of course impossible: the two sets are probability distributions on a 2- and 3-element set respectively. The argument furthermore applies in arbitrarily small neighborhoods of the origin $(0,0)\in \bC\times \bR$ because every origin neighborhood includes a scaled version of the entire space $Y$.

  Conclusion: rational-valued expectations $C(\pi^{-1}(U))\xrightarrow{} C(U)$ cannot be defined for neighborhoods $U\ni 0\in X\subset \bC$, no matter how small. 
\end{example}

\begin{remarks}\label{res:blnch_metr}
  \begin{enumerate}[(1),wide]
  \item \cite[Theorem 2.12]{2409.03531v1}, mentioned in the proof of \Cref{th:char.metr}, is very much in the same spirit as the portion of \cite[Th\'eor\`eme 3.3]{blnch} asserting that separable $C_0(X)$-algebras admit continuous fields of states (recalled also in \cite[Remark 3.10]{bg_cx-exp}). The technique employed in proving that result is very similar, hewing more closely to Michael's \cite[Theorem 3.2'']{mich_contsel-1}: rather than citing Michael's theorem directly, the auxiliary \cite[Lemme 3.8]{blnch} rather adapts the proof for maps into {\it Fr\'echet} rather than Banach spaces (i.e. \cite[\S 10]{trev_tvs} locally convex completely metrizable topological vector spaces).
    
    In reference to all of this, the proof of \cite[Lemme 3.8]{blnch} appears to me to contain a gap. The issue is with the assertion that for separable $A$ (a $C^*$-algebra there, but the Banach structure is what matters), the dual space $A^*$ is Fr\'echet under the topology of {\it compact convergence} (as the name suggests, the topology of convergence on compact sets; it is the topology denoted by $\fT_c(A)$ in \cite[\S 21.6]{k_tvs-1} and $\cT_c$ in \cite[\S IV.3.5]{bourb_tvs}, say). That claim cannot hold for {\it any} infinite-dimensional Banach space:
    \begin{itemize}[wide]
    \item The class of compact subsets of a Banach space $E$ is a {\it saturated} cover in the sense of \cite[\S 39.1]{k_tvs-2}, whence the necessary and sufficient criterion \cite[\S 39.4(6)]{k_tvs-2} for the metrizability of the topology $\fT_c(E)$: 
      \begin{equation*}
        \left(
          \exists \text{ compact }
          K_1\subset K_2\subset \cdots
        \right)
        \left(
          \forall \text{ compact }K
        \right)
        \exists n
        \quad:\quad
        K\subseteq K_n.
      \end{equation*}
      Or: $E$ is what is sometimes referred to as {\it hemicompact} (\cite[Problem 17I]{wil_top}, \cite[p.486, Definition]{ar_top}).
      
    \item For {\it first-countable} spaces (\cite[Definition 10.3]{wil_top}: every point has a countable neighborhood basis) hemicompactness is equivalent \cite[p.486, item b) following Definition]{ar_top} to local compactness, 
      
    \item and hence no infinite-dimensional Banach space can be hemicompact \cite[\S 15.7(1)]{k_tvs-1}.
    \end{itemize}
    

  \item The preceding observation notwithstanding, the {\it statement} of \cite[Lemme 3.8]{blnch} appears to be valid. Rather than demand that an appropriate topology on $A^*$ (for a separable continuous unital $C(X)$-algebra $A$) be metrizable, one can simply observe that the LSC map
    \begin{equation*}
      X\ni x
      \xmapsto{\quad}
      \text{state space }\cS(A_x)
      \in 2^{A^*}
    \end{equation*}
    takes values in the metrizable (because $A$ is separable \cite[Theorem V.5.1]{conw_fa}) compact \cite[Theorem V.3.1]{conw_fa} weak$^*$-topologized unit ball of $A^*$, and this suffices \cite[Theorem 1.2]{zbMATH03285029} to ensure the existence of a continuous selection. 

    This is also the type of machinery employed in proving \cite[Theorem 2.12]{2409.03531v1}, which cites \cite[Theorem 3.4]{horv_top-cvx} (see also \cite[Theorem 5]{zbMATH00590976}). This allows for abstract metrizable uniform spaces which nevertheless are not necessarily housed by {\it metrizable} locally convex spaces: the preeminent example is the weak$^*$-topologized unit ball of the dual of a separable Banach space.
        
  \item Concerning the same material, the discussion on \cite[p.155, post Remarque]{blnch} gives a slightly misleading account of why separability is crucial: it refers to \cite[Theorem 3.2'']{mich_contsel-1} as demanding it, but that result does not require separability for the Banach spaces employed (rather \cite[Theorem 3.1'']{mich_contsel-1} does, playing no role in \cite{blnch}). The issue is not so much the separability of the topological vector spaces spaces housing the convex values of the maps as their metrizability.
    
  \item The necessity of the hemicompactness of (the completely regular space) $X$ for $\cat{Cont}(X\to \bR)$ to be first-countable is proven in \cite[Theorem 8]{ar_top}; the result \cite[\S 39.4(6)]{k_tvs-2} appealed to above is a linear version thereof. 
  \end{enumerate}
\end{remarks}

Note also that the proof strategy proposed for \cite[Theorem 4.3]{pt_brnch}, of defining an expectation $C(Y)\xrightarrow{E}C(X)$ recursively over progressively higher strata $X_d$, can fail in other ways than heretofore recorded: even when such expectations do exist, an injudicious choice over a lower stratum $X_d$ might not extend across $X_{d+1}$. 

\begin{example}\label{ex:prod.2.uncount.ord}
  Let $\Omega$ be the smallest uncountable ordinal, give $[0,\Omega]$ (ordinals $\le \Omega$) the order topology (the space of \cite[Example 4.3]{ss_countertop}), and set $X:=[0,\Omega]^2$. We define a branched cover $Y\xrightarrowdbl{\pi}X$ with fiber cardinalities $1$, $2$ and $3$ as
  \begin{equation*}
    \left(Y=X\times\{\bullet,\uparrow,\downarrow\}/\sim\right)
    \xrightarrowdbl[]{\text{first projection }\pi}
    X,
  \end{equation*}
  where the equivalence relation $\sim$ identifying
  \begin{itemize}
  \item all points $(\Omega,\Omega,t)$, $t\in \{\bullet,\uparrow,\downarrow\}$;
  \item every $(\Omega,\alpha,\bullet)$ to $(\Omega,\alpha,\uparrow)$ for $\alpha\in [0,\Omega]$;
  \item and similarly, every $(\alpha,\Omega,\bullet)$ to $(\alpha,\Omega,\downarrow)$ for $\alpha\in [0,\Omega]$.
  \end{itemize}
  The strata are thus 
  \begin{equation*}
    \begin{aligned}
      X_1 &= \{(\Omega,\Omega)\}\\
      X_2 &=
            \{\Omega\}\times [0,\Omega)
            \ \sqcup\ 
            [0,\Omega)\times \{\Omega\}\\
      X_3 &= X\setminus \left(X_1\sqcup X_2\right)
            =[0,\Omega)^2.
    \end{aligned}    
  \end{equation*}
  Note incidentally that all of these are normal (unlike the strata of the examples produced by \Cref{th:pt.non.metr}, as observed in \Cref{le:strt.not.norm}): $X_1$ obviously, $X_2$ simply because it is an ordinal space \cite[Example 39 6.]{ss_countertop}, and $X_3$ by \cite[Corollary 3.3, (c') $\Leftrightarrow$ (f')]{zbMATH00090161}.
  
  Suppose, now, that one has already defined an expectation in the form of a probability-valued map \Cref{eq:prob.val.map} over $X_{\le 2}$ with $\mu_x$ uniform (in this case, this means assigning equal mass $\frac 12$ to the two points constituting the preimage of every $x\in X_2$). I claim that no continuous extension \Cref{eq:prob.val.map} over $X_3$ (and hence all of $X$) is possible. Indeed, writing $x_{\bullet,\updownarrow}$ for the preimages of $x\in X_3$ contained respectively in the sheets $X_3\times \{\circ\}$, $\circ\in \{\bullet,\updownarrow\}$, the functions
  \begin{equation*}
    X_3\cong [0,\Omega)^2
    \xrightarrow[\text{continuous}]{\quad\mu_{x}(x_{\square})\quad}
    (0,1)
    ,\quad
    \square\in\{\bullet,\updownarrow\}
  \end{equation*}
  will stabilize for $x\in [0,\infty)^2$ with both coordinates sufficiently large: this is no harder to prove than for $[0,\Omega)$ (in place of $[0,\Omega)^2$), as in \cite[Example 43 12.]{ss_countertop}, say. On the other hand though, 
  \begin{equation*}
    \begin{tikzpicture}[>=stealth,auto,baseline=(current  bounding  box.center)]
      \path[anchor=base] 
      (0,0) node (l) {$(\mu_x(x_{\uparrow}),\ \mu_x(x_{\bullet}),\ \mu_x(x_{\downarrow}))$}
      +(7,.5) node (u) {$\left(\frac 12,\ *,\ \frac 12-*\right)$}
      +(7,-.5) node (d) {$\left(\frac 12-*,\ *,\ \frac 12\right)$}
      ;
      \draw[->] (l) to[bend left=6] node[pos=.6,auto] {$\scriptstyle x\to(\alpha,\Omega)\text{ along }(\alpha,?)$} (u);
      \draw[->] (l) to[bend right=6] node[pos=.6,auto,swap] {$\scriptstyle x\to(\Omega,\alpha)\text{ along }(?,\alpha)$} (d);
    \end{tikzpicture}
  \end{equation*}
  This (given the assumed strict positivity of the $\mu_x$) contradicts said stabilization.
\end{example}

The slight awkwardness in the phrasing of \Cref{th:pt.non.metr} (referring, as it does, to truncated power-sets $Z_{[n]}$ rather than to spaces $Z$ directly), in conjunction with the positive result \cite[Theorem 2.12]{2409.03531v1} under metrizability assumptions on the base space $X$ of a subhomogeneous $C^*$-bundle, suggests further questions. One might examine, for instance, the class of compact Hausdorff spaces $X$ for which {\it all} subhomogeneous continuous $C(X)$-algebras admit (finite-type) conditional expectations. That these need not {\it all} be metrizable is not difficult to see. Some vocabulary will help streamline the discussion. 

\begin{definition}\label{def:brnching}
  A compact Hausdorff space $X$ is
  \begin{enumerate}[(1),wide]
  \item\label{item:def:brnching:cls} {\it universally branching} if the embedding $C(X)\lhook\joinrel\to C(Y)$ dual to any branched cover $Y\xrightarrowdbl{}X$ admits a finite-index expectation.

  \item\label{item:def:brnching:nc} {\it universally nc-branching} (for `non-commutative') if every subhomogeneous continuous unital $C(X)$-algebra with non-zero fibers admits a finite-index expectation. 
  \end{enumerate}
  We might occasionally use the terms in verb form: a space {\it (nc-)branches universally}.   
\end{definition}

Part \Cref{item:pr:extr.disc:subhom} of the following result says that, for appropriately well-behaved $X$, not only do all continuous subhomogeneous $C(X)$-algebras admit finite-index expectations, but in fact do so as ``tightly'' as possible

\begin{proposition}\label{pr:extr.disc}
  Let $X$ be an {\it extremally disconnected} compact Hausdorff space, i.e. \cite[\S I.4, p.32]{ss_countertop} one whose open set closures are open.

  \begin{enumerate}[(1),wide]
  \item\label{item:pr:extr.disc:gen} Every continuous unital $C(X)$-algebra with non-zero fibers admits an expectation. 
    
  \item\label{item:pr:extr.disc:subhom} Every continuous subhomogeneous unital $C(X)$-algebra $C(X)\xrightarrow{\iota}A$ with non-zero fibers admits an expectation $C(X)\xrightarrow{E}A$ with K-constant \Cref{eq:k.const} as small as possible \cite[Theorem 1.4]{bg_cx-exp}, i.e. equal to the {\it rank} \cite[Remark 2.6]{bg_cx-exp}
    \begin{equation*}
      r(A)=r(\iota):=
      \sup_{x\in X}\left(\sum\text{dimensions of irreducible $A_x$-representations}\right). 
    \end{equation*}
    In particular, $X$ is universally nc-branching. 
  \end{enumerate}
\end{proposition}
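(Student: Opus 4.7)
The plan in both parts is to realize an expectation as a continuous section of an appropriate compact Hausdorff bundle over $X$, and then invoke Gleason's theorem: extremally disconnected compact Hausdorff spaces are precisely the projective objects in $\cat{CH}$, so every continuous surjection of compact Hausdorff spaces onto such an $X$ admits a continuous section.

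For \Cref{item:pr:extr.disc:gen}, I would set
\[
  Y:=\left\{\varphi\in \cS(A)\ |\ \varphi|_{C(X)}\text{ is a Dirac measure}\right\}\subseteq \cS(A),
\]
topologized as a subspace of the weak$^*$-compact state space $\cS(A)$. The restriction map $\cS(A)\to\cS(C(X))$ being weak$^*$-continuous and the Diracs forming a closed copy of $X$ inside $\cS(C(X))$, the set $Y$ is compact Hausdorff, and the projection $p:Y\to X$ extracting the support point of $\varphi|_{C(X)}$ is continuous. It is surjective because each fiber $A_x$ is non-zero and hence carries a state, which pulls back along $A\twoheadrightarrow A_x$ to a point of $Y$ over $x$. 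Gleason produces a continuous section $\sigma:X\to Y$, and $E(a)(x):=\sigma(x)(a)$ is then the desired expectation.

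For \Cref{item:pr:extr.disc:subhom} I would refine $Y$ by imposing the fiberwise K-constant bound: with $r:=r(A)<\infty$ (finite by subhomogeneity), set
\[
  Y_r:=\left\{\varphi\in Y\ :\ r\,\varphi(a^*a)\ge \|a\|_{p(\varphi)}^2\text{ for every }a\in A\right\}.
\]
A short computation inside the finite-dimensional fiber $A_x\cong \bigoplus_i M_{n_i(x)}(\bC)$ shows that the ``summed-trace'' state $\psi_x:=r(A_x)^{-1}\sum_i\mathrm{tr}_i$ (unnormalized traces) satisfies $r(A_x)\psi_x\ge \id_{A_x}$, using $\mathrm{tr}(b^*b)\ge \|b\|^2$ in $M_n$; since $r\ge r(A_x)$, the pullback of $\psi_x$ to $A$ lies in $Y_r$ over $x$, so $Y_r\twoheadrightarrow X$. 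Continuity of the bundle makes each defining inequality weak$^*$-closed in $Y$, whence $Y_r$ is compact Hausdorff. A second application of Gleason projectivity yields a continuous section, and thus an expectation $E$ with $K(E)\le r=r(A)$; the matching lower bound $K(E)\ge r(A)$ is \cite[Theorem 1.4]{bg_cx-exp}. The last sentence of the statement then follows immediately.

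The main obstacle I would anticipate is certifying that $Y_r$ is closed and surjects onto $X$: both require upgrading the pointwise inequality $r\psi_x\ge \id_{A_x}$ to a weak$^*$-closed condition defining a compact subspace of $\cS(A)$, and this is where continuity (as opposed to mere upper semicontinuity) of the $C^*$-bundle is genuinely used — without it the defining functions $(\varphi,x)\mapsto r\varphi(a^*a)-\|a\|_x^2$ are only upper semicontinuous, and the closedness of $Y_r$ would no longer be automatic.
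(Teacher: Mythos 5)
Your proposal is correct and follows essentially the same route as the paper: in both parts the expectation is obtained as a continuous selection of fiber states, i.e.\ a section of the compact Hausdorff surjection onto $X$ given by the states of $A$ lying over Dirac measures (the paper's $\cS(\cA)$, resp.\ its subspace of states with fiberwise K-constant $\le r(A)$, whose non-emptiness you reprove by the same trace computation the paper outsources to \cite[Lemma 3.2]{bg_cx-exp}), with Gleason projectivity of extremally disconnected compact Hausdorff spaces supplying the section. The only blemish is the semicontinuity label in your closing caveat: with merely upper semicontinuous norms the defining functions $(\varphi,x)\mapsto r\varphi(a^*a)-\|a\|_x^2$ would be \emph{lower} semicontinuous (not upper), which is exactly why their superlevel sets need not be closed; this does not affect the proof, which assumes a continuous bundle.
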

\begin{proof}
  \begin{enumerate}[(1),wide]
  \item The $C^*$ bundle $\cA\xrightarrowdbl{\pi}X$ associated to $A$ induces a surjection
    \begin{equation*}
      \cS(\cA)\xrightarrowdbl{\pi^*}X
      ,\quad
      \cS(\cA):=\bigcup_{x\in X}\left(\text{state space }\cS(\cA_x)\right)
      \subset \text{state space }\cS(A=\Gamma(\cA)),
    \end{equation*}
    where the inclusion is that induced by the surjections $A\xrightarrowdbl{}A_x=A/I_x\cdot A$ ($\cS(\cA)$ is what \cite[\S 3.1]{blnch} denotes by $\cS_X(A)$). Having topologized $\cS(\cA)$ with weak$^*$ subspace topology inherited from that embedding, $\pi^*$ is a continuous map of compact Hausdorff spaces by \cite[Proposition 15.4]{gierz_bdls}. $X$ being by assumption extremally disconnected and hence \cite[Theorem 2.5]{gls_proj-top-sp} {\it projective} in the category of compact Hausdorff spaces (i.e. \cite[\S V.4]{mcl_2e} {\it epimorphisms} with codomain $X$ split), that map has a right inverse:
    \begin{equation*}
      \exists
      X\lhook\joinrel\xrightarrow[\text{continuous}]{\quad\iota\quad}
      \cS(\cA)
      ,\quad
      \pi^*\circ\iota=\id_X. 
    \end{equation*}
    The target expectation is
    \begin{equation*}
      A=\Gamma(\cA)\ni s
      \xmapsto{\quad E\quad}
      \bigg(
      X\ni x
      \xmapsto{\quad}
      \iota(x)(s(x))
      \bigg)
      \in C(X).
    \end{equation*}
       
  \item The above proof applies, with the space
    \begin{equation}\label{eq:s.fixed.k}
      \cS_{K=r(\cA)}(\cA)
      :=
      \bigcup_{x\in X}
      \left\{\varphi\in \cS(\cA_x)\ |\ r(\cA)\cdot \varphi-\id\ge 0\right\}
      \subset \cS(\cA)
    \end{equation}
    in place of the larger $\cS(\cA)$ used in the proof of part \Cref{item:pr:extr.disc:gen}. The composition
    \begin{equation*}
      \cS_{K=r(\cA)}(\cA)
      \lhook\joinrel\xrightarrow{\quad}
      \cS(\cA)
      \xrightarrowdbl{\quad\pi^*\quad}
      X
    \end{equation*}
    is still a surjection, given that \cite[Lemma 3.2]{bg_cx-exp} the individual state spaces
    \begin{equation*}
      \left\{\varphi\in \cS(\cA_x)\ |\ r(\cA_x)\cdot \varphi-\id\ge 0\right\}
      \subseteq
      \left\{\varphi\in \cS(\cA_x)\ |\ r(\cA)\cdot \varphi-\id\ge 0\right\}
    \end{equation*}
    are non-empty for all $x\in X$. 
  \end{enumerate}
\end{proof}

\Cref{pr:extr.disc}\Cref{item:pr:extr.disc:subhom} is in a sense at the opposite end of a spectrum from the analogous result \cite[Theorem 2.12]{2409.03531v1} for metrizable $X$: there are no infinite extremally disconnected dyadic spaces \cite[Theorem 4]{zbMATH03199381}, let alone metrizable ones (compact metric spaces being dyadic \cite[Theorem 8.3.6]{sem_ban-sp-cont}). 

We will see later (\Cref{cor:ord.nc.brnch}) that orderable spaces (brought up in \Cref{res:ordrbl}\Cref{item:res:ordrbl:mich.ord} in connection with \Cref{ex:lex.sq}) behave well branching-wise. In preparation for that, we record in \Cref{le:ext.same.k} a slight generalization of \cite[Proposition 3.4]{bg_cx-exp}. {\it Homogeneous} bundles \cite[Definition 2.2]{dupre_hilbund-1} are those whose fibers have constant finite dimension.


\begin{lemma}\label{le:ext.same.k}
  Let $\cA\xrightarrowdbl{\pi}X$ be a continuous homogeneous $C^*$ bundle over a paracompact (Hausdorff) space and $F\subseteq X$ a closed subset.

  A finite-index expectation $\Gamma\left(\cA|_F\right)\xrightarrow{E|_F}C(Y)$ extends to one such over all of $X$, with the same K-constant. 
\end{lemma}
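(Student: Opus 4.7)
The plan is to translate the extension problem into a continuous-selection problem for the finite-index state sub-bundle from \Cref{eq:s.fixed.k}, then solve it by trivializing $\cA$ locally and assembling via a partition of unity.

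As in the proof of \Cref{pr:extr.disc}\Cref{item:pr:extr.disc:subhom}, a unital conditional expectation $\Gamma(\cA)\xrightarrow{E}C(X)$ with $KE-\id\ge 0$ is the same data as a continuous section
\[
\sigma_E\colon X\longrightarrow \cS_{K}(\cA)=\bigcup_{x\in X}\{\varphi\in\cS(\cA_x)\ :\ K\varphi-\id\ge 0\}\subseteq \Gamma(\cA)^*,
\]
via $E(s)(x):=\sigma_E(x)(s(x))$. The given finite-index expectation on $F$ thus corresponds to a continuous section $\sigma_F\colon F\to\cS_K(\cA)|_F$, and the problem becomes one of extending $\sigma_F$ continuously to all of $X$, landing inside $\cS_K(\cA)$ (so as to preserve the K-bound).

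Being continuous and homogeneous with finite-dimensional fibers, $\cA$ is locally trivial by Dupr\'e's theorem: each $x_0\in X$ has an open neighborhood $U$ with $\cA|_U\cong U\times M$ for some fixed finite-dimensional $C^*$-algebra $M$, whence $\cS_K(\cA)|_U\cong U\times\cS_K(M)$, where $\cS_K(M)$ is a compact convex subset of the finite-dimensional Banach space $M^*$. Cover $X$ by such trivializing opens $\{U_i\}_i$. For each $U_i$ meeting $F$, the restriction $\sigma_F|_{F\cap U_i}$, read through the trivialization, is a continuous map from the closed subspace $F\cap U_i\subseteq U_i$ into the convex target $\cS_K(M_i)\subset M_i^*$; Dugundji's convex-valued extension theorem (or equivalently Michael's selection extension theorem, given the paracompactness of $U_i$) produces a continuous extension $\sigma_i\colon U_i\to \cS_K(\cA)|_{U_i}$. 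For $U_i$ disjoint from $F$, take any continuous local section (e.g.\ a constant one), available because $\cS_K(M_i)$ is non-empty by \cite[Lemma 3.2]{bg_cx-exp} (the K-constant of $\sigma_F$ on a fiber inside $F$ majorises the rank, which is shared across the entire bundle by homogeneity).

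Using paracompactness of $X$, refine $\{U_i\}_i$ to a locally finite cover and pick a subordinate partition of unity $(\rho_i)_i$. Set
\[
\sigma(x):=\sum_i \rho_i(x)\,\sigma_i(x),
\]
interpreted fiberwise in $\cA_x^*$: local finiteness makes $\sigma$ a well-defined continuous section of the dual bundle $\cA^*$, and convexity of $\cS_K(\cA_x)$ keeps $\sigma(x)$ inside $\cS_K(\cA_x)$, so that the associated expectation indeed satisfies $KE-\id\ge 0$ with the same K-bound. For $x\in F$, every $i$ with $\rho_i(x)>0$ has $x\in F\cap U_i$ and hence $\sigma_i(x)=\sigma_F(x)$, whence $\sigma|_F=\sigma_F$. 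The principal technical hurdle is the local extension across $F$ in the middle step: one must honour the K-bound throughout, which is precisely why the argument takes place inside the smaller convex sub-bundle $\cS_K(\cA)\subset \cS(\cA)$ rather than in the ambient full state bundle.
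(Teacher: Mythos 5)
Your proposal is correct and follows essentially the same route as the paper's own proof: localize using local triviality of the continuous homogeneous bundle, extend the field of states over each trivializing patch into the compact convex set of states with K-constant at most $K(E|_F)$ (Tietze/Dugundji), and glue with a partition of unity, convexity ensuring the K-constant does not increase and that the extension restricts to $E|_F$ on $F$. The one (minor, and shared with the paper) caveat concerns patches not meeting $F$: homogeneity fixes the fibre dimension but not the rank of the fibres, so the nonemptiness of $\cS_{K=K(E|_F)}(\cA_x)$ there is not quite a consequence of homogeneity alone, though this does not affect the lemma's intended use.
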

\begin{proof}
  Forming convex combinations cannot increase K-constants past the supremum among the members entering the combination:
  \begin{equation*}
    K\left(\sum c_i E_i\right)\le \sup_i K(E_i)
    \quad\text{for}\quad
    c_i\ge 0,\ \sum c_i=1. 
  \end{equation*}
  The usual technique of defining the desired extension $E$ locally on open sets $U_i$ and then passing to $\sum \varphi_i E_i$ for a {\it partition of unity} \cite[Theorem 5.1.9]{eng_top_1989} $(\varphi_i)_i$ subordinate to the cover $(U_i)_i$ reduces the problem to its local analogue. 
  
  Because by \cite[Theorem 5.1]{hk_shv-bdl} bases of fibers $\cA_x$ extend locally around $x$ to linearly independent \cite[p.231, Remarque]{dd} sections, we may as well (having shrunken $X$ appropriately) assume the bundle trivial: $\cA\cong X\times A$ for some finite-dimensional $C^*$-algebra $A$. The desired expectation $E$ is now nothing but a continuous map
  \begin{equation*}
    X
    \xrightarrow{\quad}
    \cS_{K=K(E|_F)}(A)
    :\xlongequal{\quad\text{cf. \Cref{eq:s.fixed.k}}\quad}
    \left\{\varphi\in \cS(A)\ |\ K(\varphi)=K(E|_F)\right\},
  \end{equation*}
  and the extensibility of such a map from the closed subset $Y\subseteq X$ to all of $X$ follows from the {\it Tietze extension theorem} \cite[Theorem 15.8]{wil_top} (given that the set $\cS_{K=K(E|_F)}(A)$ is compact convex in some Euclidean space). 
\end{proof}


As \Cref{ex:pt_2-3_sheets} pointed out, one complicating issue, when defining/extending expectations $\Gamma_b(\cA)\to C_b(X)$ is the fact that points might have multiple strata interacting in arbitrarily small neighborhoods. The following notion is meant formalizes one way in which this does {\it not} occur.

\begin{definition}\label{def:tame.strat}
  A subhomogeneous (F) Banach bundle $\cA\xrightarrowdbl{}X$ is {\it tamely stratified} if for every $d\in \bZ_{\ge 0}$ the open subspace $X_d\subseteq X_{\le d}$ is a disjoint union
  \begin{equation}\label{eq:xd.clopen.part}
    X_d = \coprod_{i\in I}U_i
    ,\quad
    U_i\text{ clopen in }X_{\le d}
  \end{equation}
  so that
  \begin{equation*}
    \left(\text{boundary of $U_i$ in $X_{\le d}$}=:\right)
    \partial_{X_{\le d}} U_i
    \ni x
    \xmapsto{\quad}
    \dim \cA_x
    \in \bZ_{\ge 0}
    \text{ is locally constant}.
  \end{equation*}
\end{definition}

\begin{theorem}\label{th:tame.strat}
  A tamely stratified continuous unital subhomogeneous $C^*$-bundle with non-zero fibers over a paracompact Hausdorff space is a non-commutative branched cover. 
\end{theorem}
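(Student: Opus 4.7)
The plan is to induct on the maximum fiber dimension $N$ (finite by subhomogeneity), reformulating via \cite[Theorem 1.4 and Lemma 3.2]{bg_cx-exp}: a finite-index expectation with K-constant at most $r(\cA)$ is equivalent to a continuous section $\sigma : X \to \cS_{K=r(\cA)}(\cA)$ of the state-space subbundle \Cref{eq:s.fixed.k}. The base case $N = 1$ is trivial---take $\sigma(x) = \id_{\cA_x}$.

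For the inductive step, restrict the bundle to the closed paracompact subspace $X_{\le N-1}$: the restricted bundle inherits the tame stratification (the clopen partitions of $X_d$ in $X_{\le d}$ for $d \le N-1$ are unchanged, as the relevant subspaces $X_{\le d} \subseteq X_{\le N-1}$ carry the same topology induced from $X$), has max fiber dimension $< N$, and by the induction hypothesis admits a section $\sigma^{\le N-1}$. The tame stratification applied at $d = N$ partitions the top stratum as $X_N = \coprod_i U_i$ with each $U_i$ clopen in $X_{\le N} = X$, hence clopen in $X$. On each such $U_i$, paracompact Hausdorff and carrying the homogeneous sub-bundle $\cA|_{U_i}$ of dimension $N$, \Cref{le:ext.same.k} applied with empty initial closed subset produces a section $\sigma_i : U_i \to \cS_{K = r(\cA|_{U_i})}(\cA|_{U_i})$. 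Gluing piecewise---$\sigma|_{U_i} := \sigma_i$ and $\sigma|_{X_{\le N-1}} := \sigma^{\le N-1}$---yields a candidate $\sigma : X \to \cS_{K = r(\cA)}(\cA)$ with uniformly bounded K-constant, and whose continuity within any single clopen $U_i$ or within the closed $X_{\le N-1}$ is immediate from the constituent continuities.

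The main obstacle is verifying continuity of $\sigma$ at boundary points $x_0 \in X_{\le N-1}$ approached by sequences from $X_N = \bigcup_i U_i$. Since each $U_i$ is closed in $X$, no such approach can lie in a single $U_i$; a problematic sequence must therefore traverse infinitely many distinct $U_{i_n}$. The role of tame stratification is precisely to force the $\sigma_i$'s into mutual coherence at such limits: concretely, one chooses $\sigma_i(x)$ to be the canonical rank-$r(\cA_x)$ state (the trace weighted by simple-summand dimensions), whose values on any continuous section assemble into a continuous function of $x$ across stratum boundaries via the $(F)$-bundle norm continuity. Absent tameness---as in the counterexamples \Cref{ex:pt_2-3_sheets,ex:3.sheeted.disk,ex:prod.2.uncount.ord} and for the non-metrizable base spaces of \Cref{th:pt.non.metr}---no such globally coherent selection is available, and the corresponding bundles fail to be non-commutative branched covers.
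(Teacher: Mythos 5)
Your construction has a genuine gap at the heart of the inductive step: the section you build over the top stratum is chosen \emph{independently} of the inductively constructed $\sigma^{\le N-1}$ (you take the canonical tracial state on each piece, via \Cref{le:ext.same.k} with empty initial set), and continuity of the glued map at points of $X_{\le N-1}$ is then asserted on the grounds that the canonical trace ``assembles into a continuous function of $x$ across stratum boundaries''. That assertion is false, and its failure is exactly the phenomenon recorded in \Cref{le:23.sheets}: normalized traces do not vary weak$^*$-continuously across dimension jumps. A minimal tame counterexample: $X=[0,1]$, $Y=\{(x,y)\in[0,1]\times\bR\ :\ y\in\{0,x,2\}\}$, $\pi=$ first projection. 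The fibers have $3$ points over $(0,1]$ and $2$ over $0$, the bundle is tamely stratified ($U=(0,1]$ is all of $X_3$, its boundary in $X_{\le 3}$ is $\{0\}$, where the dimension is constant), yet the uniform measures $\tfrac13(\delta_0+\delta_x+\delta_2)$ converge as $x\to 0^+$ to $\tfrac13(2\delta_0+\delta_2)$, not to the uniform state $\tfrac12(\delta_0+\delta_2)$ your base/inductive step has already installed over $X_{\le 2}=\{0\}$. So your glued $\sigma$ is discontinuous even in the simplest tame situation; the theorem's conclusion holds there, but not by your construction. The paper's proof is organized precisely around this obstruction: the extension into $\overline{U_i}$ is \emph{anchored to the boundary data} $E|_{\partial U_i}$ --- Fell-type embeddings $\iota_x\colon\cA_{x_0}\hookrightarrow\cA_x$ of boundary fibers into nearby top-stratum fibers, a Tietze extension of the boundary states into a collar, promotion to full states via the trace-closest density $t_x$, a partition of unity to globalize along the boundary, and only then \Cref{le:ext.same.k} on the homogeneous remainder. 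No choice made on the top stratum without reference to the boundary values can play this role.

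A secondary problem is the reading of \Cref{def:tame.strat}. Taking the $U_i$ to be clopen in $X_{\le N}=X$ makes the boundary condition of the definition vacuous and is not the intended meaning: compare \Cref{thn:well.stratified} and the proof of \Cref{cor:ord.nc.brnch}, where the $U_i$ are open order-intervals with nonempty two-point boundaries, hence certainly not closed in $X_{\le d}$. The pieces are relatively clopen in the open stratum $X_d$, and the entire content of tameness is the local constancy of $\dim\cA_x$ on the (generally nonempty) boundary $\partial_{X_{\le d}}U_i$ --- the hypothesis that makes the collar-extension step above possible. Consequently your analysis of nets ``traversing infinitely many $U_{i_n}$'' addresses the wrong difficulty; the difficulty already arises at $\partial U_i$ for a single $i$, and under either reading your coherence mechanism (continuity of the canonical trace) does not exist. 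Finally, the closing claim that the bundles of \Cref{ex:pt_2-3_sheets,ex:3.sheeted.disk,ex:prod.2.uncount.ord} ``fail to be non-commutative branched covers'' contradicts the paper: in the first two the base is metrizable, so they \emph{are} quantum branched covers by \cite[Theorem 2.12]{2409.03531v1}; what fails there is only the naive stratum-by-stratum recursion of the kind you propose.
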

\begin{proof}
  Let $\cA\xrightarrowdbl{\pi}X$ be the bundle in question. The sought-after expectation
  \begin{equation*}
    A:=\Gamma_b(\cA)
    \xrightarrow{\quad E\quad}
    C_b(X)
  \end{equation*}
  amounts to a weak$^*$-continuous map
  \begin{equation*}
    X\ni x
    \xmapsto{\quad}
    E_x\in \cS(\cA_x)\le \cS(\Gamma(\cA))
  \end{equation*}
  with $K(E_x)$ bounded in $x$. We construct that map recursively over the strata $X_{\le d}$ of \Cref{eq:strata} for increasing $d$. There will furthermore be no need to keep track of the finite-index condition: the construction will make it clear that the K-constants $K(E_x)$ (defined as in \Cref{eq:k.const}) of the states $\cA_x\xrightarrow{E_x}\bC$ that constitute the expectation are bounded uniformly (in $x$).
  
  To initiate the recursion, note \cite[Proposition 16.4 and Theorem 18.5]{gierz_bdls} that $\cA$ is locally trivial over the ``lowest'' stratum
  \begin{equation*}
    X_{\le d_0} = X_{d_0} = \left\{x\in X\ |\ \dim \cA_x=d_0:=\min_{y}\dim \cA_y\right\}
  \end{equation*}
  and take for $E_x\in \cS(\cA_x)$, $x\in X_{d_0}$ the minimal-K-constant state on that $C^*$-algebra (that state is canonically attached \cite[Lemma 3.2]{bg_cx-exp} to the $C^*$-algebra in question, hence its continuous variation in locally trivial $C^*$ bundles). 

  Assume next that we have defined $E_x$ for $x\in X_{\le d}$ and seek to extend the definition over $X_{\le d'}$ for the next higher fiber dimension $d'>d$. We may as well assume $X=X_{\le d'}$, by way of simplifying the notation.

  Let
  \begin{equation}\label{eq:xd.clopen.part}
    X_{d'} = \coprod_{i\in I}U_i
    ,\quad
    U_i\text{ clopen in }X
  \end{equation}
  be the decomposition afforded by \Cref{def:tame.strat} and the tameness assumption. Because $E_x$ is assumed defined over the boundaries
  \begin{equation*}
    \partial U_i\subseteq X_{\le d},
  \end{equation*}
  it will suffice to extend that definition continuously over every closure $\overline{U_i}=U_i\sqcup \partial U_i$ individually: the restrictions $E|_{\overline{U_i}}$ thus obtained will then glue to a single expectation $E$. We assume, in short, that $X=\overline{U_i}$ for one single $i$ (so we may as well write $U:=U_i$), $E|_{F:=\partial U}$ is already in place, and what is needed is its extension across $U$.

  \begin{enumerate}[(I),wide]
  \item\label{item:frwrd.loc} {\bf : Moving inward into $U$ from its boundary $F$, locally on the latter.} Fix for the moment an $x_0\in F$, and a closed neighborhood $G_{x_0}\ni x_0$ sufficiently small to ensure that the restriction
    \begin{equation*}
      \cA|_{G_{x_0}}
      \cong G_{x_0}\times A
      ,\quad\text{$A$ a finite-dimensional $C^*$-algebra}
    \end{equation*}
    is trivialized by sections $(s_{x_0,j})_j$ of $\cA|_{G_{x_0}}$ providing $C^*$ embeddings
    \begin{equation}\label{eq:nearby.emb}
      \cA_{x_0}
      \ni
      \sum c_j s_{x_0,j}(x_0)
      \xmapsto{\quad\iota_x\quad}
      \sum c_j s_{x_0,j}(x)
      \in
      \cA_x
      ,\quad x\in G_{x_0}
    \end{equation}
    (that such sections exist is ensured by \cite[Theorem 3.1]{fell_struct}).
    
    The slices $E_x$, already defined for 
    \begin{equation*}
      x\in F_{x_0}:=F\cap G_{x_0},
    \end{equation*}
    constitute a continuous map
    \begin{equation*}
      F_{x_0}
      \xrightarrow{\quad}
      \text{state space }\cS(A),
    \end{equation*}
    extensible continuously to $G_{x_0}$ by Tietze \cite[Theorem 15.8]{wil_top} to provide faithful states $E_x|_{\iota_x(\cA_{x_0})}$ varying $x$-continuously. 
    
    Writing $\mathrm{tr}$ for the canonical minimal-K-constant (tracial \cite[Lemma 3.2]{bg_cx-exp}) state on whatever finite-dimensional $C^*$-algebra the notation applies to, the original expectation $E|_{F_{x_0}}$ extends to all of $G_{x_0}$ by 
    \begin{equation*}
      E_x := \mathrm{tr}(t_x\cdot \bullet)
      ,\quad
      x\in G_{x_0}
    \end{equation*}    
    with $t_x\in \cA_x$ the (unique) element therein with
    \begin{equation*}
      \mathrm{tr}(t_x\cdot\bullet) = E_x\text{ on }\iota_x(\cA_{x_0}),
    \end{equation*}
    closest to $\iota_x(\cA_{x_0})\le \cA_x$ with respect to the (norm induced by the) inner product $\braket{a\mid b}:=\mathrm{tr}(a^*b)$ on $\cA_x$.

  \item\label{item:frwrd.glob} {\bf Globalizing \Cref{item:frwrd.loc} over all of $F$.} Cover $F$ with the $G_{x}$, $x\in F$ of step \Cref{item:frwrd.loc}, fix a closed subset $G\subseteq X$ with 
    \begin{equation*}
      G\subseteq \bigcup_{x\in F}G_x^{\circ}      
      \quad\text{and}\quad
      F\subseteq G^{\circ}
    \end{equation*}
    with `$^\circ$' denoting interiors (possible by paracompactness and hence \cite[Theorem 20.10]{wil_top} normality), and glue the restrictions $E|_{G\cap G_x}$ provided by that preceding step by means of a partition of unity on $G$ subordinate to the open cover
    \begin{equation*}
      G=\bigcup_{x\in F}\left(G\cap G^{\circ}_x\right).
    \end{equation*}
    
  \item\label{item:all} {\bf : Extension to all of $X$.} We now have $E|_G$, where $G\subseteq X$ is closed and contains the ``degenerate'' stratum $X_{\le d}\subset X$ in its interior. This means that on the closed subspace $\overline{X\setminus G}\subseteq X$ the bundle $\cA$ is homogeneous, and we can extend $E$ from its boundary $\partial(X\setminus G)$ with no K-constant increase by \Cref{le:ext.same.k}.
  \end{enumerate}  
\end{proof}

\begin{corollary}\label{cor:ord.nc.brnch}
  Totally orderable compact Hausdorff spaces are universally nc-branching in the sense of \Cref{def:brnching}\Cref{item:def:brnching:nc}. 
\end{corollary}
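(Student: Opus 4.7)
The plan is to reduce the corollary to \Cref{th:tame.strat} by showing that every continuous unital subhomogeneous $C^*$-bundle $\cA \xrightarrowdbl{\pi} X$ with non-zero fibers over an orderable compact Hausdorff space $X$ is tamely stratified. Fix a total order on $X$ compatible with its topology; then $X$ is a compact linearly ordered topological space, and for each $d \ge 0$ the closed subspace $X_{\le d}$ is a generalized ordered (GO) space under the restricted order.

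The decomposition $X_d = \coprod_i U_i$ required by \Cref{def:tame.strat} will be the partition of $X_d$ into its maximal order-convex subsets relative to the induced total order on $X_{\le d}$. I plan to verify the ingredients in turn. Each $U_i$ is open in $X_{\le d}$: given $x \in U_i \subseteq X_d$, a basic subspace-open neighborhood $N = X_{\le d} \cap (a,b)$ of $x$ contained in $X_d$ is itself order-convex in $X_{\le d}$, and then maximality of $U_i$ forces $N \subseteq U_i$. Since the complement of $U_i$ inside $X_d$ is the (open) union of the remaining $U_j$, each $U_i$ is clopen in $X_d$, and its closure in $X_{\le d}$ adds at most a boundary $\partial_{X_{\le d}} U_i \subseteq X_{\le d-1}$. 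Finally, because the closure of an order-convex subset of a GO-space remains order-convex, any point in that boundary must be the infimum or supremum of $U_i$ in $X_{\le d}$ lying outside $U_i$; hence $|\partial_{X_{\le d}} U_i| \le 2$.

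A function on a two-point set is vacuously locally constant, so the tameness condition is met and \Cref{th:tame.strat} delivers the finite-index expectation. The most delicate step I expect is the openness of $U_i$ in $X_{\le d}$: subspaces of linearly ordered topological spaces generally carry a topology strictly finer than their suborder topology, so one cannot simply appeal to the fact that open sets in a linearly ordered topological space decompose into convex components --- the argument specifically uses that basic opens in $X_{\le d}$ arise as traces of open intervals in $X$ and hence happen to be order-convex in $X_{\le d}$.
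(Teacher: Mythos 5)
Your proposal is correct and takes essentially the same route as the paper: reduce to \Cref{th:tame.strat} by showing tame stratification, decomposing each $X_d$ into its order-convex components inside $X_{\le d}$, whose boundaries consist of at most two points (an infimum and/or supremum), so the local-constancy requirement on $\dim\cA_x$ is automatic. The only difference is cosmetic: the paper simply notes that the compact strata $X_{\le d}$ are themselves orderable (so $X_d$ splits into open intervals), whereas you keep the induced GO-space structure and check openness of the convex components via traces of intervals and maximality --- a slightly more careful handling of the subspace-versus-order-topology point, which is moot for compact subspaces anyway.
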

\begin{proof}
  This is a consequence of \Cref{th:tame.strat}, given that over totally orderable compact Hausdorff spaces {\it all} subhomogeneous continuous unital $C^*$ bundles with non-zero fibers are tamely stratified in the sense of \Cref{def:tame.strat}.

  Indeed, the closed strata $X_{\le d}$ are themselves orderable, and the open subsets $X_d\le X_{\le d}$ thereof decompose as disjoint unions of open intervals: the $U_i$ of \Cref{eq:xd.clopen.part}. The boundaries $\partial U_i$ will each consist of $\le 2$ interval endpoints, hence tame stratification.
\end{proof}

\begin{remarks}\label{res:emb.germ}
  \begin{enumerate}[(1),wide]
  \item   The embeddings \Cref{eq:nearby.emb} of the fibers of a $C^*$ bundle into their nearby neighbors can be exploited to produce a number of pathological examples. If, for
    \begin{equation*}
      X\ni x,x'
      \text{ arbitrarily close to }
      x_0\in X
    \end{equation*}
    the embeddings
    \begin{equation*}
      \cA_{x_0}\lhook\joinrel\xrightarrow{\iota^{\bullet}} \cA_{x^{\bullet}}
      ,\quad
      \bullet\in\left\{\text{blank},\ '\right\}
    \end{equation*}
    attached to section prolongations locally around $x_0$ are such that no tracial state on $\cA_{x_0}$ is a restriction of tracial states on both $\cA_{x^\bullet}$ along $\iota^{\bullet}$, then the bundle is not embeddable into a homogeneous one. Indeed, consider a homogeneous $\cB\xrightarrowdbl{}X$, trivial on a small neighborhood $U\ni x_0\in X$:
    \begin{equation*}
      \cB|_U\cong U\times B
      ,\quad
      B\text{ a finite-dimensional $C^*$-algebra}.
    \end{equation*}
    An embedding $\cA\le \cB$ will then produce a tracial state on $\cA_{x_0}$ by restricting any fixed tracial state on $B\cong \cB_x$, $x\in U$ first to $\cA_x\le \cB_x$, and then along embeddings $\cA_{x_0}\le \cA_x$ analogous as in \Cref{eq:nearby.emb}. 

    \cite[Example 2.2]{2409.03531v1} is a concrete instance of this:
    \begin{itemize}[wide]
    \item the base space $X$ is $[-1,1]$ (so presumably as well-behaved as one might hope for);

    \item the bundle is trivial, isomorphic to $X_0\times M_3$ over $X_0:=X\setminus \{0\}$;

    \item and has 2-dimensional 0-fiber $\cA_0\cong \bC^2$;

    \item while both
      \begin{equation}\label{eq:2conflicting}
        \bC^2\ni (a,b)
        \xmapsto{\quad}
        \mathrm{diag}(a,a,b)
        \quad\text{and}\quad
        \mathrm{diag}(a,b,b)
        \in M_3
      \end{equation}
      are achievable as embeddings $\cA_0\le \cA_x$ for $x\in X$ arbitrarily close to $0$. 
    \end{itemize}
    Naturally, no (automatically tracial) state on $\cA_0\cong \bC^2$ is simultaneously the restriction of the unique tracial state on $M_3$ along both \Cref{eq:2conflicting}.

  \item  The problem of embedding subhomogeneous bundles into homogeneous ones of some interest in \cite{bg_cx-exp}, where \cite[Example 3.6]{bg_cx-exp} (also \cite[Example 3.5]{zbMATH05172034}) has this same feature. There, where the base space $X$ is the one-point compactification of the disjoint union $\bigcup_{n}\bC\bP^n$ of {\it complex projective spaces} \cite[\S 14.3]{ms-cc}, the main issue appears to be that the {\it type} \cite[Definition 3.5.7]{hus_fib} of the bundle over the individual $\bC\bP^n$ is not bounded: trivializations of the bundle are only achievable by increasingly large open covers
    \begin{equation*}
      \bC\bP^n = \bigcup_{k=1}^{\ell_n}U_k
      ,\quad
      \ell_n\xrightarrow[n]{\quad}\infty.
    \end{equation*}
    By contrast, the aforementioned \cite[Example 2.2]{2409.03531v1} has finite-type restrictions over loci where the fibers have constant dimension: the bundle is trivial over $[-1,1]\setminus \{0\}$, where the fiber is $M_3$.
  \end{enumerate}
\end{remarks}




\addcontentsline{toc}{section}{References}

\Addresses

\end{document}